\documentclass{article}
\usepackage[english]{babel}
\usepackage[T1]{fontenc}
\usepackage[utf8]{inputenc}
\usepackage{amsthm}
\usepackage{enumerate}
\usepackage{amsfonts}
\usepackage{verbatim}
\usepackage{graphicx}
\usepackage[babel=true]{csquotes}
\usepackage{amsmath}
\usepackage{amssymb}
\usepackage{mathrsfs}
\usepackage{mathtools}
\newtheorem{defi}{Definition}
\newtheorem{theo}{Theorem}
\newtheorem{prop}{Proposition}
\newtheorem{conj}{Conjecture}
\newtheorem{lem}{Lemma}
\newtheorem{coro}{Corollary}
\title{On a new conjecture about super-monochromatic factorisations and ultimate periodicity}
\date{Ca\"{\i}us Wojcik}

\begin{document}

\maketitle

\begin{abstract}
We study a conjecture linking ultimate periodicity of infinite words to the existence of colorings on finite words avoiding monochromatic factorisation of suffixes, with the extra condition that the ordered concatenation of elements of this factorisation remains monochromatic. This type of results shows the limits of Ramsey theory in the context of combinatorics on words. We show some reductions of the problem and the example of the Zimin word. Using the new notion of consecutive length, we show that squarefree words fulfill the conjecture.

\end{abstract}

\bigskip

In \cite{wojcik}, we showed the following theorem :

\begin{theo}
Let $x$ be an infinite word over an alphabet $\mathcal{A}$. Then $x$ is periodic if and only if for all coloring of its set of factors, $x$ admits a monochromatic factorisation.
\end{theo}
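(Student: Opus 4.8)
The plan is to prove the two implications separately, working with the following reading of the statement: a \emph{factorisation} of $x$ is an expression $x = u_1 u_2 u_3 \cdots$ as an infinite product of nonempty finite factors, and it is \emph{monochromatic} for a coloring $c$ if there is a single color $\gamma$ with $c(u_i) = \gamma$ for every $i$; \emph{periodic} means $x = w^\omega$ for some nonempty finite word $w$. With these conventions the forward implication is essentially a remark, and all the real content sits in the converse.

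For the forward implication I would argue that periodicity makes a monochromatic factorisation available for free, whatever the coloring is. If $x = w^\omega$, then the factorisation $x = w\,w\,w\cdots$ has all of its pieces equal to the single word $w$; hence they all receive the color $c(w)$ and the factorisation is monochromatic. This uses nothing about $c$ (not even that it employs finitely many colors), so the burden of the proof really falls on the other direction.

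For the converse I would prove the contrapositive: assuming $x$ is not periodic, I would exhibit one coloring admitting no monochromatic factorisation. The natural candidate is the \emph{most refined} coloring, namely the one that separates distinct factors, e.g. $c(u) = u$. Under such a $c$, two factors share a color precisely when they are equal as words, so a monochromatic factorisation $x = u_1 u_2 \cdots$ forces $u_1 = u_2 = \cdots = w$ for a single word $w$, and therefore $x = w^\omega$. As this contradicts non-periodicity, no monochromatic factorisation can exist under $c$, which is exactly what the contrapositive requires, completing the equivalence.

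The step I expect to carry the weight is the converse, and its crux is the freedom in the coloring. The argument above works because arbitrary colorings are allowed, so the separating coloring turns \enquote{monochromatic} into \enquote{all pieces identical}, which is literally purely periodic. If one instead insisted on finitely many colors the separating coloring would be unavailable, and the naive substitutes fail: coloring factors by their first letter, say, is defeated by placing every cut immediately before an occurrence of that letter, and any length-based coloring is defeated by cutting $x$ into blocks of a fixed length. Pinning down which colorings are permitted is therefore the decisive point, and it is precisely the obstruction that the finite-color super-monochromatic analogue studied in the rest of the paper must overcome.
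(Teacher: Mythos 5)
There is a genuine gap in the converse direction, and it is exactly the point you flag in your last paragraph and then set aside. The paper defines a coloring as a map $C\colon X\to C$ where $C$ is a \emph{finite} set; the theorem (quoted from \cite{wojcik}) is a statement about finite colorings, and that finiteness is the entire content of the result. Your separating coloring $c(u)=u$ takes infinitely many values whenever $x$ is aperiodic (an aperiodic word has infinitely many distinct factors), so it is not an admissible coloring, and with it the contrapositive collapses to the tautology you describe: \enquote{monochromatic} becomes \enquote{all blocks equal}. As you yourself observe, once only finitely many colors are permitted this coloring is unavailable and the naive substitutes (first letter, length) fail; that observation is not a side remark but precisely the statement that your argument does not prove the theorem as stated.

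The paper does not reprove the result here --- it cites it and records only the shape of the argument: one must \emph{construct a particular finite coloring, depending on the non-periodic word $x$}, such that $x$ admits no monochromatic factorisation. In \cite{wojcik} this is done with three colors, essentially by coloring a factor $u$ according to data attached to its first occurrence in $x$ (whether $u$ is a prefix of $x$, together with a comparison between $x$ and the suffix of $x$ starting where that first occurrence ends); showing that a monochromatic factorisation for such a coloring forces periodicity is the nontrivial combinatorial work, and it is the template for all the colorings built later in this paper. Your forward implication ($x=w^\omega$ yields the factorisation $w\,w\,w\cdots$, monochromatic for any coloring since all blocks equal $w$) is correct and matches the easy direction, but the converse requires a finite-coloring construction that your proposal does not supply.
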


The proof of this theorem consists in building a particular coloring, depending on a non-periodic word $x$, such that $x$ admits no factorisation (into finite words) with all factors of the same color.

We introduce and study now a more recent conjecture, stated as follows :

\begin{conj}
Let $x$ be an infinite word over an alphabet $\mathcal{A}$. Then $x$ is ultimately-periodic if and only if for all coloring of the set of finite words over $\mathcal{A}$, $x$ admits a suffix having a super-monochromatic factorisation.
\end{conj}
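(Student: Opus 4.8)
The plan is to treat the two implications separately, since they are of very different natures: the ``only if'' direction (ultimate periodicity forcing super-monochromatic suffix factorisations) should follow from a classical result of additive Ramsey theory, whereas the ``if'' direction, handled by its contrapositive, is the genuine difficulty and presumably the part that remains open in full generality.

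For the easy direction, suppose $x = u v^{\omega}$ is ultimately periodic and fix any finite coloring $c$ of $\mathcal{A}^{*}$, say with values in $\{1,\dots,r\}$. I would discard $u$ and work with the purely periodic suffix $v^{\omega}$, reducing the problem to the exponents: define $f\colon \mathbb{N}_{\geq 1} \to \{1,\dots,r\}$ by $f(n) = c(v^{n})$. A factorisation of $v^{\omega}$ into powers of $v$, say $v^{\omega} = v^{a_{1}} v^{a_{2}} \cdots$, is super-monochromatic precisely when every subset sum $a_{i_{1}} + \cdots + a_{i_{k}}$ receives the same $f$-color, because concatenating blocks that are powers of $v$ merely adds exponents. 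Hindman's finite sums theorem, applied to $f$, produces an infinite set $D = \{d_{1} < d_{2} < \cdots\}$ whose set of finite sums $\mathrm{FS}(D)$ is monochromatic; taking the block lengths to be the elements of $D$ in increasing order yields a factorisation $v^{\omega} = v^{d_{1}} v^{d_{2}} \cdots$, which tiles $v^{\omega}$ since $\sum_{i} d_{i} = \infty$ and which is super-monochromatic since each single block and each ordered sub-concatenation has exponent in $\mathrm{FS}(D)$. Thus the suffix $v^{\omega}$ carries the required factorisation for every coloring.

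For the hard direction I would argue by contraposition: given a word $x$ that is not ultimately periodic, equivalently one all of whose suffixes are non-periodic, I must exhibit a single finite coloring of $\mathcal{A}^{*}$ under which no suffix admits a super-monochromatic factorisation. The natural starting point is the coloring built in Theorem 1, which already defeats monochromatic factorisations of a single non-periodic word; the two new demands are that the coloring must act on all finite words, not just factors of $x$, and must defeat every suffix simultaneously. Here the ``super'' hypothesis is an ally rather than an obstacle, since to spoil a candidate factorisation $y = w_{1} w_{2} \cdots$ it suffices to break the color of one block or of one concatenation $w_{i_{1}} \cdots w_{i_{k}}$; in particular the nested prefixes $w_{1},\, w_{1} w_{2},\, w_{1} w_{2} w_{3}, \dots$ of $y$ must all share a color. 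I would design the coloring so that such a monochromatic chain of prefixes forces a rigid arithmetic pattern, and then show that this pattern can occur only inside an (ultimately) periodic word, contradicting aperiodicity. This is where the \emph{consecutive length} announced in the abstract should enter: I expect it to measure, for a factor $w$, how far the periodic extension determined by $w$ persists inside $x$, and to be the invariant on which the coloring is based.

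The main obstacle is precisely this hard direction in full generality: constructing one coloring that handles the infinitely many suffixes of an arbitrary aperiodic $x$ at once, while distilling genuine periodicity out of the mere repetition of a color along all ordered sub-concatenations, seems to require controlling the recurrence structure of $x$ quite tightly. I would therefore first secure the reductions and the structured cases, checking the construction on the Zimin word, whose self-similarity makes the relevant concatenations transparent, and then on squarefree words, where the absence of repetitions should let the consecutive-length coloring collapse any super-monochromatic factorisation, before attempting the general statement, which I expect to remain conjectural.
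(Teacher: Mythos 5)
Your proposal matches the paper: the ``only if'' direction is proved exactly as in the paper's Proposition 1, by passing to the purely periodic suffix $v^{\omega}$, colouring exponents via $f(n)=C(v^{n})$, and invoking Hindman's finite-sums theorem to extract the block lengths. For the converse you correctly recognise that no general proof exists --- the statement is indeed only a conjecture in the paper --- and your proposed programme (reductions, the Zimin word, then squarefree words via the consecutive-length colouring) is precisely the partial progress the paper carries out.
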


A super-monochromatic factorisation is an expression of the form $u_1u_2u_3\ldots$ where the $(u_i)$'s are finite words and the set of words $(u_{n_1}u_{n_2}\ldots u_{n_k})$ for $k\geq 1$ and $n_1< n_2< \ldots < n_k$ is monochromatic. As for the previously stated theorem, our work will consist in building colorings forbidding the monochromaticity of certain substructures.

\bigskip

This paper is organized as follows. In a short introduction we present some notations and basic definitions from combinatorics on words. In a first part we present this conjecture in the context of Ramsey theory. Namely we use a theorem of Hindman to show that an infinite word $x$ admits an element in its subshift having a super-monochromatic factorisation. In a second part we prove some reductions of the problem, in particular that we may freely assume that the words $(u_{n_1}u_{n_2}\ldots u_{n_k})$ are factors of our base word $x$. In a third part we study the Zimin word and provide an optimal construction for this word and the related period-doubling word. In a fourth part we introduce the consecutive length associated to $x$ and show some of its properties. Finally, we build a coloring showing that squarefree words satisfy the conjecture, with three colors.

\section{Introduction}

In this part we present some basic definitions used throughout the paper.

\bigskip

Let $\mathcal{A}$ be a set, called the alphabet, which may be finite or infinite. We will be working with a fixed infinite word $x$, so that $\mathcal{A}$ will be taken countable, equal to the set of letters appearing in $x$.

An infinite word $x$ is an element of $\mathcal{A}^\mathbb{N}$. A finite word is an element of $\cup_{n\geq 1} \mathcal{A}^n$, and we note $\mathcal{A}^+$ the set of finite words. If $u=a_1a_2\ldots a_n$ then we set $n=|u|$ its length and $\{a_1a_2\ldots a_i \ | \ 1\leq i \leq n\}$ its set of prefixes. If $x$ is a finite or infinite word and $n\geq 1$, we note $\mathbb{P}_n(x)$ its prefix of length $n$. A factor $u$ of $v$ is a finite word appearing in $v$.

Let $T: x_0x_1x_2\ldots \in \mathcal{A}^\mathbb{N} \longrightarrow x_1x_2x_3\ldots\in \mathcal{A}^\mathbb{N}$ denote the shift on infinite words, depriving an infinite word of its first letter. If $x$ is an infinite word, a suffix of $x$ is an element of the form $T^k(x)$ for some $k\geq 0$.

A factorisation of an infinite word $x$ is an expression of the form $x=u_1u_2u_3u_4\ldots$ where the $(u_i)'s$ are finite words.

\bigskip

Let $X$ be any set. A coloring on $X$ is any map : $C: X \longrightarrow C$ where $C$ is a finite set, making the abuse of noting the map $C$ and its image by the same symbol (which will always be $C$). We will call the set $C$ the set of colors, for example $C=\{\textbf{red}, \textbf{blue}\}$.

Suppose that a set $X$ and a coloring $C$ on $X$ are given. A subset $Y$ of $X$ is called monochromatic if the restriction of $C$ to $Y$ is constant.

\begin{defi}[Super-monochromatic factorisation]
Let $y$ be an infinite word over $\mathcal{A}$, and $C$ a coloring on $\mathcal{A}^+$. A factorisation
\begin{center}
	$y=u_1u_2u_3u_4\ldots$
\end{center}
is said to be super-monochromatic if the set of finite words :
\begin{center}
	$\displaystyle \left( u_{n_1}u_{n_2}\ldots u_{n_k} \right)_{k\geq 1, n_1<n_2<\ldots < n_k}$
\end{center}
is monochromatic.
\end{defi}

\section{Context of Ramsey theory}

In this section we put conjecture 1 in the context of Ramsey theory. We state Hindman's theorem, show the easy part of the conjecture, and show that for any coloring of $\mathcal{A}^+$ and infinite word $x$ there is $y$ in the subshift of $x$ having a super-monochromatic factorisation.

\bigskip

Denote by $\mathcal{P}(\mathbb{N})$ the set of finite subsets of $\mathbb{N}$. Write $A<B$ for $A,B\in \mathcal{P}(\mathbb{N})$ when $\max A < \min B$.

\begin{theo}[Hindman]
\begin{enumerate}[i)]
	\item For any coloring on $\mathbb{N}$, there exists an infinite subset $M=\{m_1<m_2<m_3<\ldots\}$ of $\mathbb{N}$ such that the set
	\begin{center}
		$\displaystyle \left( m_{n_1}+m_{n_2}+\ldots + m_{n_k} \right)_{k\geq 1, n_1<n_2<\ldots < n_k}$
	\end{center}
	is monochromatic.
	\item For any coloring on $\mathcal{P}(\mathbb{N})$, there exists an infinite subset $M=\{A_1<A_2<A_3<\ldots\}$ of $\mathcal{P}(\mathbb{N})$ such that the set
	\begin{center}
		$\displaystyle \left( A_{n_1}\cup A_{n_2}\cup\ldots \cup A_{n_k} \right)_{k\geq 1, n_1<n_2<\ldots < n_k}$
	\end{center}
	is monochromatic.
\end{enumerate}
\end{theo}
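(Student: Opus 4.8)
\medskip
\noindent\textbf{Proof proposal.}

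The plan is to prove both parts uniformly by the Galvin--Glazer idempotent-ultrafilter method, viewing $(\mathbb{N},+)$ and $(\mathcal{P}(\mathbb{N}),\cup)$ as instances of an associative semigroup $(S,\ast)$ and reading \emph{monochromatic finite sums/unions} as \emph{monochromatic finite $\ast$-products taken in increasing index order}. First I would pass to the Stone--\v{C}ech compactification $\beta S$, the space of ultrafilters on $S$ with the compact Hausdorff topology generated by the clopen sets $\widehat{A}=\{p : A\in p\}$, and extend $\ast$ to $\beta S$ by declaring $A\in p\ast q$ iff $\{s\in S : s^{-1}A\in q\}\in p$, where $s^{-1}A=\{t\in S : s\ast t\in A\}$. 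The first technical point is to check that $\ast$ is associative on $\beta S$ and that each right translation $p\mapsto p\ast q$ is continuous, so that $(\beta S,\ast)$ is a compact right-topological semigroup; the Ellis--Numakura lemma (via Zorn's lemma, through a minimal nonempty closed subsemigroup) then yields an idempotent $p=p\ast p$, which for part i) I take inside the closed subsemigroup $\beta\mathbb{N}\setminus\mathbb{N}$ so that $p$ is nonprincipal.

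Given a finite coloring, exactly one color class $A$ belongs to $p$. The heart of the argument is the absorption lemma for idempotents: writing $A^{\star}=\{s\in A : s^{-1}A\in p\}$, one uses only $p\ast p=p$ and the definition of $\ast$ to prove $A^{\star}\in p$ and $s^{-1}A^{\star}\in p$ for every $s\in A^{\star}$. I would then build the sequence by recursion: having chosen the first terms, I pick the next one inside $A^{\star}\cap\bigcap_{\emptyset\neq F}\bigl(\sigma_F^{-1}A^{\star}\bigr)$, where $\sigma_F$ ranges over the $\ast$-products already formed; this set lies in $p$, hence (as $p$ is nonprincipal) is infinite, so a valid next term exists and every finite $\ast$-product of chosen terms stays in $A$. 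For part i) this is exactly the monochromatic family of finite sums.

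For part ii) the additional demand is the block condition $A_1<A_2<\cdots$, i.e. $\max A_i<\min A_{i+1}$. Here I would select the idempotent inside the nested intersection $\bigcap_{N\geq 1}\overline{\{B\in\mathcal{P}(\mathbb{N}) : \min B>N\}}$: each $\{B : \min B>N\}$ is closed under union, so its closure is a closed subsemigroup of $\beta S$, and the decreasing intersection is a nonempty compact subsemigroup, hence contains an idempotent by Ellis--Numakura. Such a $p$ forces all its members to consist of sets of arbitrarily large minimum, so at each recursion step I can choose a block lying strictly above all the previous ones; the absorption argument then keeps every finite union in the fixed color class, yielding the required increasing blocks.

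I expect the main obstacle to be the passage from the abstract idempotent to the explicit sequence --- proving the two membership properties of $A^{\star}$, and maintaining through the recursion a single $p$-large set whose elements simultaneously extend \emph{every} partial $\ast$-product formed so far. By contrast, the semigroup structure on $\beta S$ and the existence of idempotents, while demanding some care, are standard once the right-topological framework is in place.
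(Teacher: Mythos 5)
The paper offers no proof of this statement: it is quoted as a known theorem, with part i) attributed to Hindman and the implication i)\,$\Rightarrow$\,ii) explicitly left to the literature, so there is nothing internal to compare your argument against. Your proposal is the standard Galvin--Glazer idempotent-ultrafilter proof, and it is correct in outline: the conventions are internally consistent (with $A\in p\ast q$ iff $\{s : s^{-1}A\in q\}\in p$, it is indeed the maps $p\mapsto p\ast q$ that are continuous, which is what Ellis--Numakura requires), and the absorption lemma for $A^{\star}$ together with the recursion through $A^{\star}\cap\bigcap_{F}\sigma_F^{-1}A^{\star}$ is exactly the usual engine. Two points merit emphasis. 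First, in $(\mathcal{P}(\mathbb{N}),\cup)$ every principal ultrafilter is idempotent since $B\cup B=B$, so confining the idempotent to $\bigcap_{N}\overline{\{B : \min B>N\}}$ is not a refinement but a necessity; you should also record that this nested intersection of nonempty compact subsemigroups is nonempty by the finite intersection property before invoking Ellis--Numakura. Second, the route the paper implicitly intends for ii) is more elementary: transport the coloring along $A\mapsto\sum_{i\in A}2^{i}$, apply part i), and thin the resulting sequence so that binary supports form increasing blocks, turning sums into unions. Your direct treatment of the union semigroup buys uniformity (one argument for both parts) at the cost of redoing the compactness machinery in a non-cancellative semigroup; the reduction buys elementarity for ii) at the cost of a bookkeeping lemma about binary supports. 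Either is acceptable here, since the paper uses the theorem only as a black box.
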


Hindman's theorem is known as part $i)$, and the implication $i)\Rightarrow ii)$ may be found easily in the litterature.

\bigskip

\begin{prop}
Let $x$ be an infinite word over $\mathcal{A}$, and $C$ a coloring of $\mathcal{A}^+$. If $x$ is ultimately-periodic, then $x$ admits a suffix having a super-monochromatic factorisation.
\end{prop}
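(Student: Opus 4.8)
The plan is to reduce the statement to Hindman's theorem (part $i)$) applied to a suitable coloring of the positive integers. Since $x$ is ultimately periodic, we may write $x = wp^\omega$ for some finite word $w$ (possibly empty) and some nonempty finite word $p$, meaning $x = x_0\ldots x_{\ell-1}(x_\ell\ldots x_{\ell+q-1})^\omega$ with $\ell=|w|$ and $q=|p|\geq 1$. The suffix we shall exhibit is $y = T^{\ell}(x) = p^\omega = ppp\ldots$, a purely periodic word.

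First I would restrict attention to factorisations of $y$ whose factors are powers of $p$: if we set $u_i = p^{a_i}$ for positive integers $a_i$, then every concatenation appearing in the super-monochromatic condition has the form $u_{n_1}u_{n_2}\ldots u_{n_k} = p^{a_{n_1}+a_{n_2}+\ldots+a_{n_k}}$. Thus the \emph{words} that must become monochromatic are entirely governed by the \emph{integer sums} $a_{n_1}+\ldots+a_{n_k}$, which is precisely the combinatorial structure controlled by Hindman's theorem.

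Concretely, I would define a coloring $c$ of the positive integers by $c(n) = C(p^n)$, and apply Hindman's theorem to obtain an infinite set $M = \{m_1 < m_2 < \ldots\}$ all of whose finite sums $m_{n_1}+\ldots+m_{n_k}$ receive the same color under $c$; discarding $0$ if it occurs (an infinite subset of $M$ inherits the finite-sums property), we may assume every $m_i\geq 1$. Setting $u_i = p^{m_i}$ then yields a genuine factorisation $y = u_1u_2u_3\ldots$, since the partial sums $m_1+\ldots+m_i$ tend to infinity, so that the infinite concatenation $p^{m_1}p^{m_2}\ldots$ reconstructs $p^\omega = y$. By construction $C(u_{n_1}\ldots u_{n_k}) = C(p^{m_{n_1}+\ldots+m_{n_k}}) = c(m_{n_1}+\ldots+m_{n_k})$ is constant, so this factorisation is super-monochromatic.

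I do not expect a serious obstacle here: Hindman's theorem does all the work, and the only points requiring care are administrative, namely ensuring the factors $u_i$ are nonempty (handled by discarding a possible $0$ from $M$) and checking that the chosen infinite concatenation of powers of $p$ genuinely equals the suffix $y$ rather than a proper prefix of it. The one genuinely essential idea is the observation in the second paragraph, that committing to factors which are powers of $p$ collapses the word-colouring problem onto the integer-colouring problem solved by Hindman.
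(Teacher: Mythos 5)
Your proposal is correct and is essentially identical to the paper's own proof: both colour the positive integers by $n\mapsto C(p^n)$ (the paper writes $\widetilde{C}(i)=C(v^i)$), invoke Hindman's theorem to get a finite-sums-monochromatic set $M=\{m_1<m_2<\ldots\}$, and take the factorisation $p^{m_1}p^{m_2}p^{m_3}\ldots$ of the purely periodic suffix. The only difference is that you spell out the administrative checks (nonemptiness of the factors and that the concatenation really equals the suffix) which the paper leaves implicit.
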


\begin{proof}
Write $x=uvvv\ldots$ where $u$ and $v$ are finite words. Consider the coloring $\widetilde{C}$ of $\mathbb{N}^*$ defined for $i\geq 1$ by :
\begin{center}
	$\widetilde{C}(i)=C(v^i)$.
\end{center}
By Hindman's theorem, there exists an infinite subset $M=\{m_1<m_2<m_3<\ldots\}$ such that the set of words
\begin{center}
	$\displaystyle \left( v^{m_{n_1}+m_{n_2}+\ldots + m_{n_k}} \right)_{k\geq 1, n_1<n_2<\ldots < n_k}$
\end{center}
is monochromatic. So that the suffix $v^{m_1}v^{m_2}v^{m_3}\ldots$ has a super-monochromatic factorisation.
\end{proof}

\bigskip

Endow $\mathcal{A}$ with the discreet topology, and note
\begin{center}
	$\Omega (x) = \overline{\{T^{k}(x) \ | \ k\geq 0 \}}$
\end{center}
the subshift of $x$. Recall that a factor of $x$ is said to be recurrent if it appears an infinite number of times in $x$. The word $x$ itself is said to be recurrent if every factor of $x$ is recurrent. If $y$ is an infinite recurrent word such that every factor of $y$ is a factor of $x$, then $y\in \Omega (x)$. If $\mathcal{A}$ is finite, then $\Omega(x)$ admits a recurrent word by a compacity argument.

\begin{prop}
Let $x$ be an infinite word over $\mathcal{A}$ such that $\Omega(x)$ admits a recurrent element, and $C$ a coloring of $\mathcal{A}^+$. Then there exists $y\in \Omega (x)$ such that $y$ has a super-monochromatic factorisation.
\end{prop}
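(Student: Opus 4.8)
The plan is to fix a recurrent $z\in\Omega(x)$ and to manufacture from it both a word $y$ and a super-monochromatic factorisation of $y$. I will use two facts. Since $\Omega(x)=\overline{\{T^k(x)\}}$, a word $y$ lies in $\Omega(x)$ exactly when each prefix $\mathbb P_n(y)$ is a factor of $x$, and the factors of $z$ are factors of $x$. Thus it suffices to build $y=V_1V_2V_3\ldots$ with every \emph{consecutive} product $V_1\ldots V_k$ a factor of $z$ (for membership in $\Omega(x)$) while making the whole family $\bigl(V_{n_1}\ldots V_{n_k}\bigr)_{n_1<\ldots<n_k}$ monochromatic for $C$ (for super-monochromaticity). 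Note that the skipping products $V_{n_1}\ldots V_{n_k}$ need only be monochromatic: they are arbitrary words, not required to be factors.

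First I would cut $z=b_1b_2b_3\ldots$ into finite blocks and feed the colouring into Hindman's theorem: define $\widetilde C$ on finite subsets of $\mathbb N$ by $\widetilde C(A)=C\bigl(\prod_{i\in A}b_i\bigr)$ and apply part $ii)$ to extract $A_1<A_2<\ldots$ for which all unions $A_{n_1}\cup\ldots\cup A_{n_k}$ receive a single colour. Setting $V_j=\prod_{i\in A_j}b_i$ makes the family $\bigl(V_{n_1}\ldots V_{n_k}\bigr)$ monochromatic, since such a product equals $\prod_{i\in A_{n_1}\cup\ldots\cup A_{n_k}}b_i$ by the ordering $A_1<A_2<\ldots$. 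This secures the super-monochromatic condition.

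The difficulty, which is exactly what the recurrence hypothesis is there to resolve, is that $V_1\ldots V_k=\prod_{i\in A_1\cup\ldots\cup A_k}b_i$ omits every block whose index is skipped by the $A_j$, so the $V_j$ need neither be factors of $z$ nor concatenate within $z$; the naive word $V_1V_2\ldots$ typically falls outside $\Omega(x)$. To repair this I would use recurrence to re-realise the selected blocks as genuine factors: for a factor $P$ of $z$, recurrence makes the follower set $E(P)=\{w : Pw \text{ is a factor of } z\}$ infinite, so one can try to reorganise the construction as a chain, choosing at each step the next block inside $E(V_1\ldots V_{k-1})$ so that the consecutive products stay factors of $z$, and then passing to a limit point of the shifts $T^n z$ (a $p$-limit along an idempotent ultrafilter when $\mathcal A$ is finite and $\Omega(x)$ is compact) to land genuinely in $\Omega(x)$.

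I expect the main obstacle to be precisely this reconciliation. The set of factors of $z$ is not closed under concatenation, so Hindman's selection and the follower constraint pull in opposite directions; the content-sensitive, gap-creating products demanded by monochromaticity cannot all be factors (for low-complexity $z$ such as Sturmian words this is outright impossible), so the argument must confine the \emph{chain} of consecutive products to the language while letting the skipping products roam freely, and show that recurrence keeps the follower sets rich enough to meet the sets handed down by Hindman's theorem at every stage. Intertwining the idempotent extraction with the follower selection and the limit in $\Omega(x)$ is where the real work lies.
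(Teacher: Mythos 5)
You have set up the Hindman step correctly and correctly located the difficulty: with a naive blocking $z=b_1b_2b_3\ldots$ the consecutive products $V_1\cdots V_k$ omit every skipped block, so they need not be factors of $x$ and $V_1V_2\ldots$ need not lie in $\Omega(x)$. But your proposal stops exactly there. The follower-set/idempotent-ultrafilter ``repair'' is only named as a direction, and you yourself flag the intertwining of the Hindman extraction with the follower selection as ``where the real work lies''. As written this is a plan, not a proof: Hindman's theorem hands you a fixed sequence $A_1<A_2<\ldots$ with no freedom to re-route the chosen blocks through follower sets afterwards, and nothing in the sketch shows the two selections can be made compatible.

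The missing idea, which makes the difficulty evaporate, is to choose the blocks \emph{before} invoking Hindman so that every skipping product is automatically a factor of $z$. The paper does this with the \emph{suffix property}: using recurrence of $z$, build factors $u_n$ of $z$ such that $u_1u_2\cdots u_n$ is a suffix of $u_{n+1}$ (take $u_{n+1}=v\,u_1\cdots u_n$ where $u_1\cdots u_n\,v\,u_1\cdots u_n$ is a factor of $z$). Then for any $n_1<\cdots<n_k$ the word $u_{n_1}u_{n_2}\cdots u_{n_k}$ is a suffix of the genuine factor $u_1u_2\cdots u_{n_k}$, hence itself a factor of $z$. Applying Hindman (part ii) to the colouring $A\mapsto C(u_A)$ of finite subsets, where $u_A=\prod_{i\in A}u_i$, now yields $A_1<A_2<\ldots$ for which $y=\prod_n u_{A_n}$ has all its prefixes in the language of $x$ (so $y\in\Omega(x)$) and all its skipping products monochromatic, since $u_{A\cup B}=u_Au_B$ when $A<B$. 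In particular your remark that for low-complexity $z$ (e.g.\ Sturmian) the skipping products ``cannot all be factors'' is incorrect: the suffix property forces them all to be factors for any recurrent $z$, because the blocks are chosen to be highly overlapping. No ultrafilter machinery beyond the black-box use of Hindman's theorem is needed.
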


\begin{proof}
Let $z\in \Omega (x)$ be a recurrent word. We build a sequence $(u_n)_{n\geq 1}$ of factors of $z$ such that :
\begin{center}
	$\forall n\geq 1$ $u_1u_2\ldots u_n$ is a suffix of $u_{n+1}$,
\end{center}
a property that we call the suffix property, as follows. Take for $u_1$ any factor of $z$. Since $u_1$ is recurrent, there exists a finite word $v$ such that $u_1vu_1$ is a factor of $z$, and define $u_2=vu_1$. If $u_1$, $u_2$, \ldots $u_n$ are defined such that $\forall k=1\ldots n$ $u_1u_2\ldots u_{k-1}$ is a suffix of $u_{k}$ and $u_1u_2\ldots u_n$ is a factor of $z$, then there exists a finite word $v$ such that $u_1u_2\ldots u_n v  u_1u_2\ldots u_n$ is a factor of $z$. In this case, we set $u_{n+1}=v  u_1u_2\ldots u_n$. It is clear that $(u_n)$ defined by this induction process satisfy the desired property.

For a finite subset $A$ of $\mathbb{N}^*$, set
\begin{center}
	$\displaystyle u_A = \prod_{i\in A} u_i$
\end{center}
and apply Hindman's theorem to the coloring $\widetilde{C}$ of $\mathcal{P}(\mathbb{N})$ for $A\subset \mathbb{N}^*$ a finite set by :
\begin{center}
	$\widetilde{C}(A)=C(u_A)$
\end{center}
to obtain an infinite sequence $M=\{A_1<A_2< A_3 < \ldots\}$ such that the set of words
\begin{center}
	$\displaystyle \left(u_{A_{n_1}\cup A_{n_2}\cup\ldots \cup A_{n_k}}\right)_{k\geq 1, n_1<n_2<\ldots < n_k}$
\end{center}
is monochromatic. The suffix property implies that these words are factors of $x$. And since $u_{A\cup B}=u_Au_B$ whenever $A<B$, the set of concatenations 
\begin{center}
	$\displaystyle \left(u_{A_{n_1}}u_{A_{n_2}}\ldots u_{A_{n_k}}\right)_{k\geq 1, n_1<n_2<\ldots < n_k}$
\end{center}
is monochromatic and is a subset of the set of factors of $x$. This shows that the infinite word
\begin{center}
	$y=\displaystyle \prod_{n\geq 1}u_{A_n}$
\end{center}
belongs to $\Omega(x) $ and has a super-monochromatic factorisation.
\end{proof}

\section{A few reductions}

Let $x$ be an infinite word over $\mathcal{A}$. Let
\begin{center}
	$T^k(x)= u_1u_2u_3u_4\ldots$ \quad $(k\geq 0)$
\end{center}
be a factorisation of a suffix of $x$.

In this section we define some colorings that allow us to reduce the problem to a certain extend. We present three reductions. The first one allows us to reduce to the case where $x$ is recurrent at the cost of one color. The second allows us to reduce to the case where all the $(u_{n_1}u_{n_2}\ldots u_{n_k})_{k\geq 1, n_1<n_2<\ldots < n_k}$ are factors of $x$, at zero cost. The third one allows us to reduce to the case where the $(u_i)$ satisfy the suffix property, namely that $\forall n\geq 1$, $u_1u_2\ldots u_n$ is a suffix of $u_{n+1}$, at the cost of one color.

In this section, every statement concerning monochromatic factorisations is made with respect to the latest coloring defined. We start by the consideration of first occurrences of factors of $x$.

\bigskip

Write $x=x_0x_1x_2x_3\ldots$ where the $(x_i)$'s are elements of $\mathcal{A}$. For $u$ a factor of $x$, let :
\begin{center}
	$x_{A(u)}x_{A(u)+1}\ldots x_{B(u)-1}$
\end{center}
be the first occurrence of $u$ in $x$. In other words :
\begin{center}
	$A(u)=\min \{ k\geq 0 \ | \ u=x_kx_{k+1}\ldots x_{k+|u|-1} \}=\min \{ k\geq 0 \ | \ u=\mathbb{P}_{|u|}(T^k(x)) \}$
\end{center}
where we recall that $\mathbb{P}_{n}(y)$ is the prefix of length $n$ of $y$.

We have the obvious relation $B(u)-A(u)=|u|$. Moreover, if $v$ is a prefix of $u$ then $A(v)\leq A(u)$, and if $v$ is a suffix of $u$ then $B(v)\leq B(u)$. In this two functions resides the relative information between $u$ and $x$, and are relevant mostly in the non-ultimately periodic case, as shows the next proposition.

\begin{prop}
Let $x$ be a non-ultimately periodic word over $\mathcal{A}$. Then for all $k\geq 0$, there exist $N\geq 0$ such that for all $n\geq N$, we have
\begin{center}
	$k=A(\mathbb{P}_n(T^k(x)))$
\end{center}
in other word, for any suffix $y$ of $x$, a prefix of $y$ that is long enough has its first occurrence where $y$ starts.
\end{prop}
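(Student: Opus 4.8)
The plan is to prove the contrapositive-flavored statement directly: fix $k\geq 0$ and the suffix $y=T^k(x)$, and suppose for contradiction that no such $N$ exists. This means there are arbitrarily large $n$ for which the prefix $\mathbb{P}_n(y)$ occurs somewhere strictly before position $k$ in $x$, i.e. $A(\mathbb{P}_n(y))<k$. Since $A$ is always a nonnegative integer bounded above by $k$ in this bad situation, there are only finitely many ($k$ many) possible values it can take, so by pigeonhole some single position $j<k$ satisfies $A(\mathbb{P}_n(y))=j$ for infinitely many, hence (since $A$ is monotone nondecreasing along prefixes) for \emph{all} sufficiently large $n$.

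The next step is to extract periodicity from this. If $A(\mathbb{P}_n(y))=j$ for all large $n$, then for every such $n$ the word $\mathbb{P}_n(y)$ occurs starting at position $j$, which means $x_{j+i}=x_{k+i}$ for all $i$ in the relevant range; letting $n\to\infty$ gives $x_{j+i}=x_{k+i}$ for \emph{all} $i\geq 0$. In other words $T^j(x)=T^k(x)$ with $j<k$. I would then argue that this forces $x$ to be ultimately periodic: writing $p=k-j>0$, the equality $T^j(x)=T^k(x)=T^{j+p}(x)$ says that the tail $T^j(x)$ is invariant under the shift by $p$, hence $T^j(x)$ is purely periodic with period $p$, so $x$ itself is ultimately periodic. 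This contradicts the hypothesis, completing the proof.

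The one point deserving care is the monotonicity-plus-pigeonhole step: I want to upgrade ``infinitely many $n$'' to ``all large $n$.'' The clean way is to use that $v$ a prefix of $u$ implies $A(v)\leq A(u)$ (stated in the excerpt), so the sequence $n\mapsto A(\mathbb{P}_n(y))$ is nondecreasing. A nondecreasing integer sequence that is bounded above must be eventually constant, and its eventual value $j_\infty$ satisfies $j_\infty\leq k$; the hypothesis for contradiction is exactly that this eventual value is $<k$ (if it stabilized at $k$ we would be done with $N$ = the stabilization point). So the pigeonhole is really just ``bounded monotone integer sequence stabilizes,'' which is immediate.

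The main obstacle, such as it is, is making the passage to the limit rigorous: from ``$\mathbb{P}_n(y)$ occurs at position $j$ for all large $n$'' I need the genuinely infinite identity $T^j(x)=T^k(x)$, not just agreement on ever-longer finite prefixes. This is a standard compactness/coherence argument, since the finite agreements are nested and each letter $x_{j+i}=x_{k+i}$ is pinned down by taking $n$ large enough, but it is the only place where the argument moves from finite combinatorics to an infinite-word equality, so I would state it carefully. Everything else — monotonicity of $A$, the pigeonhole, and the deduction of ultimate periodicity from a shift-invariant tail — is routine.
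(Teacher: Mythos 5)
Your proof is correct and rests on the same underlying fact as the paper's: a non-ultimately periodic word has pairwise distinct suffixes, so $T^k(x)$ and $T^j(x)$ ($j<k$) are eventually distinguished by their prefixes. The paper argues directly, choosing for each $j<k$ a length $N_j$ beyond which $\mathbb{P}_n(T^k(x))\neq\mathbb{P}_n(T^j(x))$ and taking $N=\max_j N_j$, whereas you run the contrapositive via monotonicity of $n\mapsto A(\mathbb{P}_n(y))$ and a passage to the limit yielding $T^j(x)=T^k(x)$; the two arguments are essentially interchangeable.
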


\begin{proof}
The infinite word $x$ is non-ultimately periodic if and only if
\begin{center}
	$\forall n,m \in \mathbb{N}$, \quad $n\neq m$ $\ \Longrightarrow \ \ T^n(x)\neq T^m(x) $.
\end{center}
For $k\geq 0$, we have $T^k(x) \neq T^j(x)$ for all $j=0\ldots k-1$, so that
\begin{center}
	$\forall j=0\ldots k-1$, $\exists N_j\geq 0$, $\forall n \geq N_j$, $\mathbb{P}_n(T^k(x))\neq \mathbb{P}_n(T^j(x))$.
\end{center}
This shows that for all $n\geq \max\{ N_j \ |\ j=0\ldots k-1 \}$, $\mathbb{P}_n(T^k(x))$ does not appear at a place $j$ for $j<k$. Hence the proposition.

\end{proof}

Thus in an expression of the form $T^k(x)=u_1u_2u_3\ldots$, if the $(u_i)$'s are long enough, we may assume that $A(u_1)=k$, and $B(u_i)=A(u_{i+1})$ for $i\geq 1$.

\bigskip

\begin{defi}
Let $C$ be the coloring defined on $\mathcal{A}^+$ for $u\in \mathcal{A}^+$ by :
\begin{itemize}
	\item $C(u)=\textbf{red}$ if $u$ is a factor of $x$ that is not recurrent,
	\item $C(u)=\textbf{blue}$ if $u$ is not a factor of $x$ or if $u$ is a recurrent factor of $x$.
\end{itemize}
\end{defi}

\begin{prop}
Assume that no suffix of $x$ is recurrent. Then no suffix of $x$ has a super-monochromatic factorisation for the coloring $C$ defined above.
\end{prop}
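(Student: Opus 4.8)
The plan is to argue by contradiction: suppose some suffix $y=T^{k}(x)$ has a super-monochromatic factorisation $y=u_{1}u_{2}u_{3}\cdots$, so that all the words $u_{n_{1}}u_{n_{2}}\cdots u_{n_{r}}$ (for $r\geq 1$, $n_{1}<\cdots<n_{r}$) share one color $c$. Before splitting into cases I would record two preliminary facts. First, since $x$ has no recurrent suffix it cannot be ultimately periodic (a purely periodic suffix $v^{\omega}$ would be recurrent), so the map $j\mapsto T^{j}(x)$ is injective, i.e. $T^{a}(x)=T^{b}(x)\Rightarrow a=b$; this is exactly the characterization of non-ultimate-periodicity used in the proof of Proposition 3. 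Second, writing $s_{0}=k$ and $s_{i}=k+|u_{1}\cdots u_{i}|$, the piece $u_{i}$ occurs in $x$ starting at position $s_{i-1}$, and since $y$ is a suffix of $x$ each single $u_{i}$ and each prefix $u_{1}\cdots u_{n}$ is unconditionally a factor of $x$. In particular $u_{1}$ is a factor of $x$, so by definition of $C$ the color $c=C(u_{1})$ is \textbf{blue} if $u_{1}$ is recurrent and \textbf{red} otherwise.

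In the \textbf{blue} case, every prefix $u_{1}\cdots u_{n}$ is blue and is a factor of $x$, hence by definition of $C$ it is a \emph{recurrent} factor. Any factor of $y$ sits inside some prefix $u_{1}\cdots u_{n}$, and a factor of a recurrent word is recurrent; thus every factor of $y$ is recurrent, i.e. $y=T^{k}(x)$ is a recurrent suffix of $x$, contradicting the hypothesis.

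The \textbf{red} case is the heart of the matter and the step I expect to be the main obstacle. Here every product $u_{n_{1}}\cdots u_{n_{r}}$ is red, hence a factor of $x$. Fix $j\geq 2$ and consider, for $l\geq 0$, the words $u_{1}\,(u_{j}u_{j+1}\cdots u_{j+l})$: the indices $1<j<j+1<\cdots<j+l$ increase, so each is a legitimate product, hence red, hence a factor of $x$. Each such factor begins with an occurrence of $u_{1}$, so the set $O_{l}$ of positions $q$ at which $u_{1}$ occurs and is immediately followed by $u_{j}\cdots u_{j+l}$ is nonempty and is contained in the \emph{finite} set of occurrences of $u_{1}$; moreover $O_{l+1}\subseteq O_{l}$. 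A nested family of nonempty finite sets has nonempty intersection, so some position $p$ lies in every $O_{l}$: at $p$ the word $u_{1}$ is followed by $u_{j}u_{j+1}\cdots$ to every length, so $T^{p+|u_{1}|}(x)$ and $T^{s_{j-1}}(x)$ share arbitrarily long prefixes and are therefore equal. By injectivity $p+|u_{1}|=s_{j-1}$, so $u_{1}$ has an occurrence ending at position $s_{j-1}$. Letting $j$ range over $\{2,3,4,\dots\}$ yields occurrences of $u_{1}$ ending at the pairwise distinct positions $s_{1}<s_{2}<s_{3}<\cdots$, so $u_{1}$ occurs infinitely often, contradicting that it is non-recurrent.

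Since both colors lead to a contradiction, no suffix of $x$ can have a super-monochromatic factorisation for $C$. The two delicate points to get right are the nested-intersection argument — where finiteness of the occurrence set of the non-recurrent word $u_{1}$ is precisely what forces a single position $p$ valid for all lengths — and the use of non-ultimate-periodicity to upgrade "equal infinite tails" to "equal positions," which is what manufactures genuinely new occurrences of $u_{1}$.
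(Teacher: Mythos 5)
Your proof is correct, and its skeleton matches the paper's: force the factorisation to be \textbf{red}, note that \textbf{red} words are non-recurrent factors of $x$, and then use products beginning with $u_1$ to manufacture infinitely many occurrences of the non-recurrent word $u_1$. The execution differs at both ends, in ways worth noting. For the colour, the paper regroups blocks so that $u_1$ is long enough to contain a non-recurrent factor of the non-recurrent suffix $T^k(x)$, hence is \textbf{red}; you instead dispose of the \textbf{blue} case directly, observing that every prefix $u_1\cdots u_n$ would then be a recurrent factor and hence the whole suffix $y$ would be recurrent --- slightly cleaner, no regrouping needed. For the contradiction, the paper looks only at the pairs $u_1u_n$ and asserts $\lim_n A(u_1u_n)=+\infty$ with no real justification; this is precisely the delicate point, since every occurrence of $u_1u_n$ begins with an occurrence of $u_1$, so under the (to-be-contradicted) hypothesis that $u_1$ is non-recurrent the quantities $A(u_1u_n)$ live in a fixed finite set. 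Your replacement --- taking the longer words $u_1(u_ju_{j+1}\cdots u_{j+l})$, intersecting the nested finite occurrence sets $O_l$, and using injectivity of $j\mapsto T^j(x)$ to upgrade equality of tails to equality of positions, yielding an occurrence of $u_1$ ending at each $s_{j-1}$ --- is a genuinely different mechanism that fills in exactly the step the paper leaves implicit. The cost is a longer argument; the gain is that the divergence of the occurrence positions is proved rather than asserted.
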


\begin{proof}
Assume by contradiction that $x$ has a suffix $T^k(x)= u_1u_2u_3u_4\ldots$ \quad $(k\geq 0)$ having a monochromatic factorisation. In view of the definition of a super-monochromatic factorisation, $u_1$ may be taken arbitrary long. Since $T^k(x)$ is not recurrent, if $u_1$ is long enough then it will contain a non-recurrent factor of $x$, and hence $u_1$ is $\textbf{red}$. This shows that the factorisation is $\textbf{red}$.

This implies that for all $n\geq 2$, the words $u_1u_n$ are $\textbf{red}$. In particular, they are factors of $x$. But since $x$ is non-ultimately periodic (if this was the case, $x$ would have a recurrent suffix), $\lim_n A(u_1u_n)= +\infty $ and $u_1$ appears an infinite number of times in $x$, contradicting the fact that it is non-recurrent.

\end{proof}

\begin{defi}
Let $C_{NF}$ be the coloring defined on the set of non-factors of $x$ for $u\in\mathcal{A}^+$ by :
\begin{itemize}
	\item $C_{NF}(u)=\textbf{red}$ if $u$ is not a factor of $x$ and every decomposition of $u$ as a product of factors of $x$ has at least tree terms
	\item $C_{NF}(u)=\textbf{blue}$ if $u$ is not a factor of $x$ and may be written as the concatenation of two factors of $x$
\end{itemize}
\end{defi}

Notice, in this definition, that we do not specify the color of factors of $x$.

\begin{prop}
Let $T^k(x)=u_1u_2u_3\ldots$ be a factorisation of a suffix of $x$, and for $A\subset \mathbb{N}^*$ a finite subset, set $u_A=\prod_{i\in A}u_i$. Assume that
\begin{center}
	$\forall \nu \geq 0, \exists A \in \mathcal{P}(\mathbb{N}^*)$ \ such that \ $\min A \geq \nu$ and $u_A$ is not a factor of $x$.
\end{center}
Let $C$ be a coloring of $\mathcal{A}^+$ such that $C(u)=C_{NF}(u)$ whenever $u$ is not a factor of $x$. Then the set 
\begin{center}
	$\left( u_{n_1}u_{n_2}\ldots u_{n_k} \right)_{k\geq 1, n_1<n_2<\ldots < n_k}$
\end{center}
is not $C$-monochromatic.

\end{prop}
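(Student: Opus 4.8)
The plan is to break monochromaticity by exhibiting, among the words $u_{n_1}u_{n_2}\ldots u_{n_k}$, one that is \textbf{blue} and one that is \textbf{red}. Writing $u_A = u_{n_1}\ldots u_{n_k}$ for $A = \{n_1 < \ldots < n_k\}$, note first that each individual $u_i$ is a factor of $x$ (being a factor of the suffix $T^k(x)$), so every $u_A$ is by construction a concatenation of $|A|$ factors of $x$; the coloring $C_{NF}$ simply records whether a non-factor needs exactly two or at least three factors of $x$ to be so written. Since our hypothesis provides non-factor words $u_A$ (take $\nu = 0$), any monochromatic color is forced to lie in $\{\textbf{red}, \textbf{blue}\}$, and it will suffice to produce one word of each color.

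To produce a \textbf{blue} word, I would start from any non-factor $u_A$, say $A = \{i_1 < \ldots < i_m\}$, and walk along the chain $\{i_1\} \subset \{i_1,i_2\} \subset \cdots \subset A$. Since $u_{\{i_1\}} = u_{i_1}$ is a factor of $x$ while $u_A$ is not, there is a least index $t$ with $u_{\{i_1,\ldots,i_t\}}$ not a factor of $x$; then $u_{\{i_1,\ldots,i_{t-1}\}}$ is a factor and $u_{\{i_1,\ldots,i_t\}} = u_{\{i_1,\ldots,i_{t-1}\}}\,u_{i_t}$ is a concatenation of two factors of $x$ that is itself not a factor, hence \textbf{blue}.

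To produce a \textbf{red} word, I would pick two non-factor words with disjoint, ordered index sets: apply the hypothesis once with $\nu = 0$ to get a non-factor $u_A$, then again with $\nu = \max A + 1$ to get a non-factor $u_{A'}$ with $A < A'$. Then $u_{A\cup A'} = u_A u_{A'}$, which is not a factor (it has the non-factor $u_A$ as a prefix), and I claim it admits no decomposition into two factors of $x$. This is the crux of the argument. Suppose $u_A u_{A'} = pq$ with $p, q$ factors of $x$, and consider the position of the cut. If the cut falls at or before the $u_A$/$u_{A'}$ boundary, then $u_{A'}$ is a suffix of the factor $q$, so $u_{A'}$ is a factor of $x$; if it falls strictly beyond the boundary, then $u_A$ is a prefix of the factor $p$, so $u_A$ is a factor of $x$. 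Either conclusion contradicts the choice of $A$, $A'$. Hence $u_{A\cup A'}$ needs at least three factors of $x$ and is \textbf{red}.

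Since both a \textbf{blue} and a \textbf{red} non-factor occur in the family $(u_{n_1}u_{n_2}\ldots u_{n_k})_{k\geq 1, n_1<n_2<\ldots < n_k}$, and $C$ agrees with $C_{NF}$ on non-factors, the restriction of $C$ to this family takes at least two values and is therefore not constant. The only delicate point I expect is the cut-point case analysis establishing that the concatenation of two non-factors is \textbf{red}; everything else is routine bookkeeping on prefixes and suffixes.
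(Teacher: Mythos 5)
Your argument is correct. The \textbf{red} half is exactly the paper's: the paper also applies the hypothesis twice to get non-factors $u_A$ and $u_B$ with $A<B$, forms $z=u_Au_B$, and runs the same cut-point analysis to conclude that $z$ admits no two-term decomposition into factors of $x$. Where you diverge is in the overall organisation and in how the second colour is produced. The paper argues by contradiction: having forced the monochromatic colour to be \textbf{red}, it observes that any non-factor $u_iu_j$ would be \textbf{blue} (being a product of the two factors $u_i$ and $u_j$), hence every $u_S$ with $S$ a union of two intervals is a factor of $x$, and then an induction on the number of intervals shows every $u_S$ is a factor of $x$, contradicting the hypothesis. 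You instead give a direct proof, extracting a \textbf{blue} witness by a minimal-counterexample walk along the chain $\{i_1\}\subset\{i_1,i_2\}\subset\cdots\subset A$: the first set in the chain whose word is a non-factor splits as (factor)$\cdot$(factor) and is therefore \textbf{blue}. Both mechanisms rest on the same underlying fact --- you cannot pass from factors to a non-factor without creating a two-term decomposition --- but your version is cleaner: it avoids the induction over unions of intervals, exhibits the two differently coloured words explicitly, and makes it transparent that only the two applications of the hypothesis (at $\nu=0$ and $\nu=\max A+1$) are needed. No gap to report; the cut-point case analysis you flag as the delicate step is sound and is precisely the step the paper relies on as well.
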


\begin{proof}
Assume by contradiction that the set $(u_A)_{\mathcal{P}(\mathbb{N}^*)}$ is monochromatic. Take $\nu_1\geq 0$ and $A\geq \nu_1$ such that $u_A$ is not a factor of $x$. Take $\nu_2 > \max A $ and $B \geq \nu_2$ such that $u_B$ is not a factor of $x$, and set $z=u_Au_B$. The word $z$ belongs to the monochromatic set, is a non-factor of $x$ and if $z=v_1v_2$ where $v_1$ and $v_2$ are factors of $x$, then we have : $u_A$ is a factor of $v_1$ or $u_B$ is a factor of $v_2$, and at least one of $u_A$ or $u_B$ is a factor of $x$, which is impossible. This shows that any expression of $z$ as the concatenation of factors of $x$ must contain at least tree terms, so that $z$ is \textbf{red} and the factorisation is \textbf{red}.

Now let $i,j\in\mathbb{N}^*$ be such that $i+2\leq j$. If $u_iu_j$ is not a factor of $x$, then it must be $\textbf{blue}$, but this is a contradiction with the monochromatic assumption. Hence $u_iu_j$ is a factor of $x$, and actually the same proof shows that $u_A$ is a factor of $x$ whenever $A$ is the union of two intervals. Now if $A=I_1\cup I_2\cup I_3$ is the union of tree intervals, then since $u_{I_1\cup I_2}$ is a factor of $x$, $u_A$ is either \textbf{blue} or a factor of $x$. By the monochromatic assumption, $u_A$ is a factor of $x$. Proceeding by induction, we see that $u_A$ is a factor of $x$ for all finite subset $A$ of $\mathbb{N}$, but this is a contradiction with our first hypothesis.

\end{proof}

\begin{defi}
Let $C$ be the coloring defined on the set of factors $u$ of $x$ by :
\begin{itemize}
	\item $C(u)=\textbf{red}$ if and only if there exists a decomposition $u=v_1v_2$ with $A(v_1)=A(u)$ and $B(v_2)=B(u)$
	\item $C(u)=\textbf{blue}$ otherwise.
\end{itemize}

\end{defi}

\begin{prop}
If a suffix $y$ of $x$ has a super-monochromatic factorisation, then this suffix admits a super-monochromatic factorisation $y=u_1u_2u_3\ldots$ with $\forall n \geq 1$, $u_1u_2\ldots u_n$ is a suffix of $u_{n+1}$.
\end{prop}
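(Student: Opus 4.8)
```latex
The plan is to take the given super-monochromatic factorisation $y=u_1u_2u_3\ldots$ and \emph{group consecutive blocks} into longer factors, in such a way that the resulting coarser factorisation satisfies the suffix property, while remaining super-monochromatic. The key observation is that super-monochromaticity is preserved under grouping: if $A$ is a finite union of intervals of consecutive indices, then $u_A=u_{I_1}u_{I_2}\ldots u_{I_r}$ is of the form $u_{n_1}u_{n_2}\ldots u_{n_r}$ with $n_1<n_2<\ldots<n_r$, so it already lies in the monochromatic set of the original factorisation. Hence any factorisation obtained by concatenating consecutive blocks is automatically super-monochromatic for the \emph{same} colour, and I only need to control the combinatorics of first occurrences so as to force the suffix property.

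Next I would invoke the coloring $C$ on factors of $x$ from the preceding definition. By the super-monochromatic hypothesis, the whole set of concatenations is monochromatic for $C$; I claim this common colour must be \textbf{red}. Indeed, taking $n_1<n_2$ the word $u_{n_1}u_{n_2}$ admits the decomposition $v_1=u_{n_1}$, $v_2=u_{n_2}$, and by the earlier reduction (Proposition on first occurrences) I may assume the $u_i$ are long enough that $A(u_{n_1})=A(u_{n_1}u_{n_2})$ and $B(u_{n_2})=B(u_{n_1}u_{n_2})$, so $u_{n_1}u_{n_2}$ is \textbf{red}. Since the set is monochromatic, the colour is \textbf{red} throughout. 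Thus \emph{every} word $w$ in the monochromatic family admits a splitting $w=v_1v_2$ with $A(v_1)=A(w)$ and $B(v_2)=B(w)$; that is, $w$ occurs for the first time exactly where its relevant prefix and suffix first occur, which is the information that lets me align occurrences end-to-start.

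The construction then proceeds by induction, building the new blocks $w_1,w_2,\ldots$ as concatenations of consecutive $u_i$'s. Having built $w_1,\ldots,w_m$ as a prefix-grouping using up blocks $u_1,\ldots,u_{p_m}$, I want the next block $w_{m+1}=u_{p_m+1}\ldots u_{p_{m+1}}$ to contain $w_1w_2\ldots w_m = u_1\ldots u_{p_m}$ as a suffix. Using that the relevant concatenations are \textbf{red} and hence have their first occurrences governed by $A$ and $B$ as above, I can locate an occurrence of the prefix $u_1\ldots u_{p_m}$ immediately preceding some later block boundary in $y$, and take $p_{m+1}$ to be that boundary; the length functions $A,B$ guarantee the two occurrences abut, so $w_1\ldots w_m$ is genuinely a suffix of $w_{m+1}$. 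The extra color spent is the distinction between \textbf{red} and \textbf{blue} in the definition of $C$, accounting for the announced cost of one color.

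The main obstacle I expect is the alignment step: ensuring that the occurrence of $u_1\ldots u_{p_m}$ that I use to define the suffix really \emph{coincides} with a prefix of a tail of $y$, rather than merely appearing somewhere in $x$. The \textbf{red} coloring is exactly the tool designed to overcome this, since it pins the first occurrence of a concatenation to the first occurrences of its left and right parts, forcing the relevant occurrences to be consecutive rather than overlapping; carefully chaining these equalities $A(v_1)=A(w)$, $B(v_2)=B(w)$ across the induction is where the real work lies.
```
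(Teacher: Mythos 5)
Your setup is right---grouping consecutive blocks preserves super-monochromaticity, the relevant coloring is the red/blue one based on splittings $u=v_1v_2$ with $A(v_1)=A(u)$ and $B(v_2)=B(u)$, and the common colour must be \textbf{red}---but there are two genuine gaps. First, your justification that every $u_{n_1}u_{n_2}$ is red ``via the decomposition $v_1=u_{n_1}$, $v_2=u_{n_2}$'' fails for $n_2>n_1+1$: the first occurrence of $u_{n_1}$ in $x$ is followed by $u_{n_1+1}$, not by $u_{n_2}$, so there is no reason to have $A(u_{n_1})=A(u_{n_1}u_{n_2})$ or $B(u_{n_2})=B(u_{n_1}u_{n_2})$. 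Only a consecutive product such as $u_1u_2$ can be shown red directly; the other words in the family then inherit the colour red by monochromaticity, but with an \emph{unknown} witness decomposition. Extracting information from that unknown witness is the entire content of the proof, and it is exactly the step you defer as ``where the real work lies.''

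Second, the alignment argument you sketch (``locate an occurrence of $u_1\ldots u_{p_m}$ immediately preceding some later block boundary'') is never actually derived from the coloring. The paper's mechanism is concrete and different from your regrouping scheme: after normalising so that $A(u_1)=k$, $B(u_i)=A(u_{i+1})$ and $|u_{n+1}|\geq |u_1u_2\ldots u_n|$, one applies redness to the \emph{skipped} concatenation $z=u_1u_2\ldots u_nu_{n+2}$. Writing $z=v_1v_2$ with $A(v_1)=A(z)$ and $B(v_2)=B(z)$, a length count rules out the split point falling inside $u_1\ldots u_n$ (that case would force $|z|\geq B(u_{n+2})-A(u_1)=|z|+|u_{n+1}|$), so $v_2$ must be a suffix of $u_{n+2}$, whence $B(z)=B(u_{n+2})$; since the first occurrence of $u_{n+2}$ is immediately preceded by $u_{n+1}$ and $|u_{n+1}|\geq|u_1\ldots u_n|$, this places $u_1\ldots u_n$ as a suffix of $u_{n+1}$, with no regrouping into new blocks $w_m$ needed at all. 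Your proposal never supplies this (or any) mechanism for pinning down where the red witness split can fall, so as written it is an outline of intent rather than a proof.
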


\begin{proof}
Write $y=T^k(x)=u_1u_2u_3\ldots$ a super-monochromatic factorisation with the assumption that $A(u_1)=k$ and $B(u_i)=A(u_{i+1})$ for all $i\geq 1$. Assume also that $\forall n\geq 1$, $|u_{n+1}|\geq |u_1u_2\ldots u_n|$. We see in these conditions that $u_1u_2$ is \textbf{red}, so that the factorisation is \textbf{red}.

Let $n\geq 1$, and consider the \textbf{red} factor $u_1u_2\ldots u_n u_{n+2}$ of $x$. Write $u_1u_2\ldots u_n u_{n+2}=v_1v_2$ with $A(v_1)=A(u_1u_2\ldots u_n u_{n+2})$ and $B(v_2)=B(u_1u_2\ldots u_n u_{n+2})$. If $v_1$ is a prefix of $u_1u_2\ldots u_n$, then $A(v_1)\leq A(u_1u_2\ldots u_n)=A(u_1)$. Also in this case, $u_{n+2}$ is a suffix of $v_2$ so that $B(v_2)\geq B(u_{n+2})$. This implies, with $z=u_1u_2\ldots u_n u_{n+2}$,
\begin{center}
	$|z|= B(z)-A(z) \geq B(u_{n+2}) - A(u_1) = |u_1u_2\ldots u_nu_{n+1}u_{n+2}|=|z|+|u_{n+1}|> |z|$
\end{center}
wich is a contradiction.

So $v_2$ must be a suffix of $u_{n+2}$. So we have
\begin{center}
	$B(v_2)\leq B(u_{n+2}) \leq B(u_1u_2\ldots u_nu_{n+2})=B(v_2)$
\end{center}
So that $B(u_1u_2\ldots u_nu_{n+2})=B(u_{n+2})$. But this implies that $u_1u_2\ldots u_n$ is a suffix of $u_{n+1}$.
\end{proof}

\section{The example of the Zimin word}

In this section we study the Zimin word, which is an infinite word over an infinite alphabet. No knowledge of this word is required. We build a coloring answering to the conjecture that has 2 colors. In the end of this section we will present how to adapt these results to the doubling-period word, wich is a word over the alphabet $\{0,1\}$. When the author writes these lines, these are the only fully complete examples with 2 colors available.

\begin{defi}
Define the Zimin word $Z$ as the infinite word over the infinite alphabet $\mathcal{A}_x=\{x_1, x_2, x_3, \ldots \}$ by one of the tree equivalent definitions :
\begin{enumerate}
	\item $Z=\lim Z_n$ where $Z_1=x_1$, and $Z_{n+1}=Z_nx_{n+1}Z_n$ for $n\geq 1$
	\item $Z=\displaystyle \prod_{n\geq 1}x_{val_2(n)+1}$ where $val_2$ is the $2$-adic valuation.
	\item $Z$ is the fixed point of the morphism $\varphi : x_i \mapsto x_1x_{i+1}$ $(i\geq 1)$
\end{enumerate}
so that
\begin{center}
	$Z=x_1x_2x_1x_3x_1x_2x_1x_4x_1x_2x_1x_3x_1x_2x_1x_5x_1x_2x_1x_3\ldots$
\end{center}
\end{defi}

We leave the proof of the equivalence between these definitions to the reader, and will use the first one for proofs.

For a factor $u$ of $Z$, set
\begin{center}
	$k(u)=\max\{k\geq 0 \  |\ x_k \text{ appears in }u \}$,
\end{center}
and notice that for $k\geq 1$, $k=k(u)$ if and only if $u$ is a factor of $Z_k$ but is not a factor of $Z_{k-1}$. Since the letter $x_k$ appears only once in $Z_k$, we see that the letter $x_{k(u)}$ appears only once in the factor $u$. More generaly, between two occurences of a letter $x_k$, there must be a letter $x_l$ with $l>k$.

\begin{defi}
Define the two sequences of words $(u_n)_{n\geq 1}$ and $(v_n)_{n\geq 1}$ by $u_1=v_1=x_1$ and for $n\geq 1$ :
\begin{center}
	$u_{n+1}=x_{n+1}u_1u_2\ldots u_n$ \quad and \quad $v_{n+1}=v_nv_{n-1}\ldots v_1x_{n+1}$.
\end{center}
\end{defi}

For $A,B \subset \mathbb{N}$ two finite sets, let
\begin{center}
	$\displaystyle u_A=\prod_{i\in A}^{\rightarrow}u_i=u_{i_1}u_{i_2}\cdots u_{i_k}$ \ where \ $A=\{i_1<\ldots<i_k\}$
\end{center}
and
\begin{center}
	$\displaystyle v_B=\prod_{j\in B}^{\leftarrow}v_j=v_{j_k}v_{j_{k-1}}\cdots v_{j_1}$ \ where \ $B=\{j_1<\ldots<j_k\}$.
\end{center}

\begin{prop}
Let $n\geq 1$. Then
  \begin{enumerate}
	\item The proper suffixes of $u_n$ are exactly the words $u_A$ where $A\subset [1,n[$.
	\item For all $A\subset [1,n[$, $Z_{n-1}=v_{[1,n[\backslash A}u_A$ \ and \ $u_n=x_nv_{[1,n[\backslash A}u_A$
\end{enumerate}
\end{prop}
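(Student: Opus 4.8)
The plan is to prove statement (2) first by induction on $n$, and then to deduce statement (1) by a length-counting argument. Before the induction I would record three elementary facts, each proved by a one-line induction from the definitions: the length formula $|u_i| = 2^{i-1}$ (so that $|u_A| = \sum_{i\in A} 2^{i-1}$), together with the two identities
\[
u_1 u_2 \cdots u_m = Z_m \qquad\text{and}\qquad v_m v_{m-1}\cdots v_1 = Z_m
\]
valid for all $m \geq 1$. The first follows from $u_1\cdots u_{m+1} = Z_m \cdot u_{m+1} = Z_m x_{m+1}(u_1\cdots u_m) = Z_m x_{m+1} Z_m = Z_{m+1}$, and the second symmetrically from $v_{m+1} = (v_m\cdots v_1)x_{m+1} = Z_m x_{m+1}$, whence $v_{m+1}v_m\cdots v_1 = Z_m x_{m+1} Z_m = Z_{m+1}$. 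These are exactly the extremal cases $A=[1,n[$ and $A=\emptyset$ of statement (2), and they are what feeds the induction.

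For the induction on (2), the base case is a direct check, and in the step from $n$ to $n+1$ I would split on whether the top index $n$ lies in $A \subseteq [1,n+1[ = \{1,\ldots,n\}$, using the recursion $Z_n = Z_{n-1} x_n Z_{n-1}$. If $n\in A$, write $A = A' \cup \{n\}$ with $A' \subseteq [1,n[$; then $u_A = u_{A'}u_n$ and the complement is unchanged, so that $v_{[1,n]\setminus A}u_A = (v_{[1,n[\setminus A'}u_{A'})\,u_n = Z_{n-1}\cdot x_n Z_{n-1} = Z_n$, using the induction hypothesis on the parenthesis and $u_n = x_n Z_{n-1}$. If $n\notin A$, then $A\subseteq [1,n[$ and $n$ is the largest index of the complement, so the decreasing product pulls $v_n = Z_{n-1}x_n$ to the front: $v_{[1,n]\setminus A}u_A = Z_{n-1}x_n\,(v_{[1,n[\setminus A}u_A) = Z_{n-1}x_n Z_{n-1} = Z_n$, again by the induction hypothesis. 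The second formula $u_n = x_n v_{[1,n[\setminus A}u_A$ is then immediate since $u_n = x_n Z_{n-1}$.

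To derive (1), I would read the second formula of (2) as exhibiting each $u_A$ (for $A\subseteq [1,n[$) as a suffix of $u_n$, proper because the prefix $x_n v_{[1,n[\setminus A}$ is nonempty. It then remains to see there are no others, which is pure counting: by the length formula $|u_A| = \sum_{i\in A}2^{i-1}$, so as $A$ ranges over the $2^{n-1}$ subsets of $\{1,\ldots,n-1\}$ the lengths $|u_A|$ run through $0,1,\ldots,2^{n-1}-1$ exactly once each, by uniqueness of binary expansion. Since $|u_n| = 2^{n-1}$, these are precisely the lengths of the proper suffixes of $u_n$, and a word has a unique suffix of each given length; hence $A\mapsto u_A$ is a bijection onto the set of proper suffixes, which is statement (1).

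The main obstacle is the bookkeeping in the inductive step of (2): one must keep straight that $u_A$ is built in increasing index order while $v_B$ is built in decreasing order, and check that inserting the top index $n$ either into $A$ or into its complement reproduces exactly the two halves of the split $Z_n = Z_{n-1}x_n Z_{n-1}$. Everything else is routine once the two extremal identities $u_1\cdots u_m = Z_m$ and $v_m\cdots v_1 = Z_m$ are in place.
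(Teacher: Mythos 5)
Your proof is correct, and it reverses the paper's logical order in an instructive way. The paper proves part (1) first — asserting that each $u_A$ with $A\subset[1,n[$ is ``clearly'' a proper suffix of $u_n$ and then pinning down that these are \emph{all} the proper suffixes by the same binary-expansion length count you use — and only then derives part (2) from the palindromicity of $Z_{n-1}$, the fact that $v_A$ is the reversal of $u_A$, and the identification of $u_A$ as the suffix of $Z_{n-1}$ of the appropriate length. You instead prove (2) directly by induction on $n$, splitting on whether the top index lies in $A$ and feeding in the two extremal identities $u_1\cdots u_m=Z_m$ and $v_m\cdots v_1=Z_m$, and then read off the ``each $u_A$ is a proper suffix'' half of (1) as an immediate consequence of the formula $u_n=x_nv_{[1,n[\backslash A}u_A$. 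Your route is longer on bookkeeping but more self-contained: it avoids invoking the palindrome property of $Z_n$ and the reversal relation between the $u_i$ and $v_i$ (each of which needs its own small induction anyway), and it replaces the paper's unproved ``it is clear'' step with an actual argument. The paper's route is shorter once those structural facts about $Z$ are granted. The counting argument that closes part (1) — lengths $\sum_{i\in A}2^{i-1}$ exhausting $0,\dots,2^{n-1}-1$ exactly once by uniqueness of binary expansion — is identical in both.
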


\begin{proof}
From the relation $|Z_{n+1}|=1+2|Z_n|$ and $|Z_1|=1$ one derive $|Z_n|=2^n-1$. Similarly we get $|u_n|=2^{n-1}$. It is clear that each $u_A$ for $A\subset [1,n[$ are proper suffixes of $u_n$. Since we have $|u_A|=\sum_{i\in A} 2^{i-1}$, we see that $u_A$ is characterized by its length, and that every length is obtained that way, proving the statement.

For the second relation, notice that $u_A$ is the reversal of $v_A$, and use the fact that $Z_n$ is a palindrome and $u_n$ is the suffix of length $2^{n-1}$ of $Z_n$. 
\end{proof}

\begin{prop}
Every factor $u$ of $Z$ writes uniquely in the form :
\begin{center}
	$u=u_Ax_{k(u)}v_B$
\end{center}
with $A,B\subset [1,k(u)[$.
\end{prop}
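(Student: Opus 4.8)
The plan is to locate $u$ inside a single central block $Z_{k-1}x_kZ_{k-1}$ and then read off the prefix and suffix pieces using the suffix/prefix descriptions of the previous proposition. Set $k=k(u)$. As recalled just before the statement, $x_k$ occurs exactly once in $u$, so there is a unique way to write $u=p\,x_k\,s$, where $p$ and $s$ are the (possibly empty) factors lying before and after this occurrence. This already fixes the middle letter and shows that any decomposition of the required form must come from splitting $u$ at its unique maximal-index letter; hence the real content is to identify $p$ with some $u_A$ and $s$ with some $v_B$.

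First I would pin down the position of the occurrence of $x_k$. Since $k=k(u)$, the word $u$ is a factor of $Z_k=Z_{k-1}x_kZ_{k-1}$ but not of $Z_{k-1}$, and $x_k$ appears in $Z_k$ only at the central position $|Z_{k-1}|$. Consequently the occurrence of $x_k$ inside $u$ must align with that central letter, so that $p$ is a suffix of the left copy of $Z_{k-1}$ and $s$ is a prefix of the right copy of $Z_{k-1}$.

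Next I would translate \enquote{suffix of $Z_{k-1}$} and \enquote{prefix of $Z_{k-1}$} into the $u_A,v_B$ language. Taking $A=[1,k[$ in the previous proposition gives $Z_{k-1}=u_1u_2\cdots u_{k-1}$ and hence $u_k=x_kZ_{k-1}$; since every proper suffix of $u_k$ has length strictly below $2^{k-1}=|u_k|$, these proper suffixes are exactly the suffixes of $Z_{k-1}$, which by part (1) of that proposition are precisely the words $u_A$ with $A\subset[1,k[$. Thus $p=u_A$ for some such $A$. For the prefixes I would use that $Z_{k-1}$ is a palindrome together with the relation $\widetilde{u_i}=v_i$ (immediate by induction from the definitions, whence $\widetilde{u_A}=v_A$); reversing the description of the suffixes then shows that the prefixes of $Z_{k-1}$ are exactly the words $v_B$ with $B\subset[1,k[$, so $s=v_B$ for some such $B$. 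This yields $u=u_Ax_{k(u)}v_B$.

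For uniqueness I would invoke the length argument already used in the previous proposition: since $|u_A|=\sum_{i\in A}2^{i-1}$, the set $A$ is recovered from $|p|$ via its binary expansion, and likewise $B$ from $|s|$; as the splitting $u=p\,x_k\,s$ is itself forced by the single occurrence of $x_k$, the pair $(A,B)$ is unique. The step I would argue most carefully is the central-alignment claim: one must be certain that a factor of $Z$ whose largest letter is $x_k$ genuinely sits inside one copy of $Z_k$ straddling its central $x_k$, which is exactly the content of the characterization $k=k(u)\iff u$ is a factor of $Z_k$ but not of $Z_{k-1}$ recalled above.
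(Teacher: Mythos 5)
Your proposal is correct and follows essentially the same route as the paper: locate $u$ inside $Z_{k(u)}=Z_{k(u)-1}x_{k(u)}Z_{k(u)-1}$ (the paper's ``write $Z_{k(u)}=\lambda u\rho$''), align the unique occurrence of $x_{k(u)}$ with the centre, identify the two sides as $u_A$ and $v_B$ via the previous proposition together with the palindrome/reversal relation $\widetilde{u_A}=v_A$, and get uniqueness from the lengths $|u_A|=\sum_{i\in A}2^{i-1}$. You have simply filled in the details the paper leaves implicit.
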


\begin{proof}
Unicity is clear by consideration of the lengths. For existence, write $Z_{k(u)}=\lambda u \rho$ and use the previous proposition.
\end{proof}

\begin{prop}
Let $u,v$ be two factors of $Z$ such that $k(u)<k(v)$, and write :
\begin{center}
	$u=u_{A_1}x_{k(u)}v_{B_1}$ \quad and \quad $v=u_{A_2}x_{k(v)}v_{B_2}$
\end{center}
with $A_1,B_1 \subset [1,k(u)[$ and $A_2,B_2 \subset [1,k(v)[$. Then :
\begin{enumerate}
	\item $uv$ is a factor of $Z$ if and only if $k(u)\notin A_2$ and $B_1=[1,k(u)[\backslash ( A_2\cap [1,k(u)[ )$.
	\item if $uv$ is a factor of $Z$ then
	\begin{center}
		$uv=u_{A_1\cup \{k(u)\}\cup (A_2\cap ]k(u),k(v)[)}x_{k(v)}v_{B_2}$
	\end{center}
	\item $u$ is a suffix of $v$ if and only if $k(u)\in B_2$ and $B_2\cap [1,k(u)[=B_1$
\end{enumerate}
\end{prop}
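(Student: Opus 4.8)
The plan is to reduce all three assertions to one structural fact about the Zimin word. Since $Z_k=Z_{k-1}x_kZ_{k-1}$ and the higher recursion $Z_n=Z_{n-1}x_nZ_{n-1}$ merely duplicates whole $Z_k$-blocks, every occurrence of the letter $x_k$ inside $Z$ (hence inside any $Z_{K-1}$ with $K>k$) is immediately preceded and immediately followed by a full copy of $Z_{k-1}$, and the letters flanking those two copies have index $>k$. In particular the maximal run of letters of index $<k$ lying just after (resp. just before) any occurrence of $x_k$ is exactly $Z_{k-1}$, and between two occurrences of $x_k$ there is always a letter of index $>k$. I would prove this first, by induction on the level, and use it throughout. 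Write $k=k(u)$ and $K=k(v)$, and split $A_2=A_2'\sqcup A_2''$ with $A_2'=A_2\cap[1,k[$ and $A_2''=A_2\cap\,]k,K[$ (and similarly for $B_2$ in part $3$).

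For the ``if'' direction of $(1)$ together with $(2)$: assuming $k\notin A_2$ and $B_1=[1,k[\setminus A_2'$, the complementary decomposition $Z_{k-1}=v_{[1,k[\setminus A_2'}\,u_{A_2'}$ from the earlier proposition gives $v_{B_1}u_{A_2'}=Z_{k-1}$, whence $x_kv_{B_1}u_{A_2'}=x_kZ_{k-1}=u_k$. Substituting, $uv=u_{A_1}\,u_k\,u_{A_2''}\,x_K\,v_{B_2}=u_{A_1\cup\{k\}\cup A_2''}\,x_K\,v_{B_2}$, which is (a suffix of $Z_{K-1}$)$\cdot x_K\cdot$(a prefix of $Z_{K-1}$), hence a factor of $Z_K$; this is exactly the expression claimed in $(2)$.

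For the ``only if'' direction of $(1)$: if $uv$ is a factor then, $x_K$ being its unique highest letter and occurring only in $v$, we must have $uv=W\,x_K\,v_{B_2}$ with $W=u\,u_{A_2}$ a suffix of $Z_{K-1}$, say $W=u_C$. The explicit $x_k$ ending $u$ is the first occurrence of $x_k$ in $W$, since everything before it ($u_{A_1}$) has index $<k$. If $k\in A_2$ then $u_{A_2}$ would contribute a second $x_k$ with only indices $<k$ in between, contradicting the structural fact; hence $k\notin A_2$. Applying the same fact to that first $x_k$ shows the maximal $<k$-run following it equals $Z_{k-1}$; in $W=u_{A_1}x_k\,v_{B_1}u_{A_2'}\,u_{A_2''}$ this run is precisely $v_{B_1}u_{A_2'}$, so $v_{B_1}u_{A_2'}=Z_{k-1}$, and comparison with $Z_{k-1}=v_{[1,k[\setminus A_2'}u_{A_2'}$ forces $B_1=[1,k[\setminus A_2'$. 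The boundary case $A_2''=\emptyset$ (where the run reaches the end of $W$) is handled by noting that then $\max C=k$, so $W$ ends in $u_k=x_kZ_{k-1}$ and the $Z_{k-1}$ after its $x_k$ is complete.

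For $(3)$: since $u$ contains no $x_K$ and the only $x_K$ of $v$ is central, $u$ is a suffix of $v$ iff $u$ is a suffix of $v_{B_2}$. Reversing, and using that the reversal of $u_A$ is $v_A$ and that $Z_{K-1}$ is a palindrome, this amounts to $u_{B_1}x_kv_{A_1}$ being a prefix of $u_{B_2}$. The mirror of the run argument, applied to the first occurrence of $x_k$ whose preceding maximal $<k$-run must be $Z_{k-1}$, shows this first $x_k$ can only be supplied by a block $u_k$, forcing $k\in B_2$, and that the prefix before it equals $u_{B_2\cap[1,k[}$, so $B_2\cap[1,k[=B_1$. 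Conversely, if $k\in B_2$ and $B_2\cap[1,k[=B_1$, then $v_{B_2}$ ends in $Z_{k-1}x_kv_{B_1}$ and, since $u_{A_1}$ is a suffix of $Z_{k-1}$, the word $u=u_{A_1}x_kv_{B_1}$ is a suffix of $v_{B_2}$. The main obstacle throughout is these two ``only if'' arguments: pinning down the exact equalities $v_{B_1}u_{A_2'}=Z_{k-1}$ and $B_2\cap[1,k[=B_1$ (rather than mere inclusions) is where the $Z_{k-1}$-neighbourhood lemma does the real work and where the empty-set and last-occurrence boundary cases must be verified by hand.
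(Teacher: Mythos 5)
Your proposal is correct and follows essentially the same route as the paper: the same structural facts (every occurrence of $x_k$ is flanked by full copies of $Z_{k-1}$, and two occurrences of $x_k$ must be separated by a letter of higher index), the same complementary decomposition $Z_{k-1}=v_{[1,k(u)[\backslash A}u_A$ yielding $x_{k(u)}v_{B_1}u_{A_2\cap[1,k(u)[}=u_{\{k(u)\}}$, and the same reduction of part $3$ to $u$ being a suffix of $v_{B_2}$. Your treatment is somewhat more explicit about the ``only if'' directions and the boundary cases, and you handle part $3$ by reversal and palindromicity where the paper argues block by block, but these are presentational variations on the same argument.
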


\begin{proof}
Assume that $uv$ is a factor of $Z$. Then we must have $k(u)\notin A_2$ for otherwise the factor $uv$ of $Z$ would contain two occurences of $x_{k(u)}$ and no letter $x_l$ with $l>k(u)$ between them. Now the recursive definition of $Z$ shows that each occurrence of a letter $x_k$ is followed by the word $Z_{k-1}$. This shows that $x_{k(u)}v_{B_1}u_{A_2\cap [1,k(u)[}=u_{\{k(u)\}}$ and this implies that $B_1=[1,k(u)[\backslash ( A_2\cap [1,k(u)[ )$. Conversely, if $B_1=[1,k(u)[\backslash ( A_2\cap [1,k(u)[ )$, then $x_{k(u)}v_{B_1}u_{A_2\cap [1,k(u)[}=u_{\{k(u)\}}$ and we have :
\begin{center}
	$uv=u_{A_1}u_{\{k(u)\}}u_{A_2 \cap ]k(u),k(v)[}x_{k(v)}v_{B_2}$
\end{center}
showing at once that $uv$ is a factor of $Z$ and the desired formula.

Since $k(u)<k(v)$, $u$ is a suffix of $v$ if and only if $u$ is a suffix of $v_{B_2}$. It is clear that $u$ is a suffix of $v_{k(u)}v_{B_1}$ so that it is enough to prove the property assuming $u=v_{k(u)}v_{B_1}$. Since $v_n$ ends with the letter $x_n$, $v_{k(u)}v_{B_1}$ is a suffix of $v_{B_2}$ if and only if it is a suffix of $v_{B_2\cap [1,k(u)]}$ and this shows that $k(u)\in B_2\cap [1,k(u)]$. A similar induction shows that $B_1\subset B_2\cap [1,k(u)[$. If $k\in B_2\cap [1,k(u)[$ and $k\notin B_1$, then in $v$ the letter $x_k$ has two occurrences, without a letter $x_l$ with $l>k$ between them, and this is a contradiction.
\end{proof}

\begin{coro}
Let $u,v$ and $w$ be tree factors of $Z$ with $k(u)<k(v)<k(w)$. Then :
\begin{enumerate}
	\item if $uv$ and $vw$ are factors of $Z$, then $uvw$ is a factor of $Z$.
	\item if $uw$ and $vw$ are factors of $Z$, then $u$ is a suffix of $v$.
\end{enumerate}
\end{coro}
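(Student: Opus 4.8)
The plan is to reduce both statements to the preceding proposition by writing each of the three factors in its canonical form and tracking the associated index sets through the set-theoretic criteria. Write $u=u_{A_1}x_{k(u)}v_{B_1}$, $v=u_{A_2}x_{k(v)}v_{B_2}$ and $w=u_{A_3}x_{k(w)}v_{B_3}$, and abbreviate $a=k(u)$, $b=k(v)$, $c=k(w)$, so that $a<b<c$ and $A_i,B_i\subset[1,k(\cdot)[$. In this way each statement becomes a matter of verifying the membership and equality conditions of the previous proposition for the relevant pairs.

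For the first statement, I would begin by using part 2 of the preceding proposition to compute the canonical form of the factor $uv$. Since this gives $k(uv)=b$ with associated $v$-part equal to $B_2$, applying the factor criterion (part 1) to the pair $(uv,w)$, which is legitimate because $k(uv)=b<c=k(w)$, requires exactly the two conditions $b\notin A_3$ and $B_2=[1,b[\backslash(A_3\cap[1,b[)$. But these are precisely the conditions furnished by the hypothesis that $vw$ is a factor of $Z$, obtained by applying part 1 to the pair $(v,w)$. Hence $(uv)w=uvw$ is a factor of $Z$, and the first claim follows with essentially no computation once the canonical form of $uv$ is in hand.

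For the second statement, the hypotheses that $uw$ and $vw$ are factors give, via part 1 applied to $(u,w)$ and to $(v,w)$,
\[
a\notin A_3,\quad B_1=[1,a[\backslash(A_3\cap[1,a[),\qquad b\notin A_3,\quad B_2=[1,b[\backslash(A_3\cap[1,b[).
\]
To conclude that $u$ is a suffix of $v$ I would verify the two conditions of part 3 applied to the pair $(u,v)$, namely $a\in B_2$ and $B_2\cap[1,a[=B_1$. The membership $a\in B_2$ is immediate since $a\in[1,b[$ while $a\notin A_3$. The set equality is the only genuine computation: intersecting the description of $B_2$ with $[1,a[$ and using $[1,a[\subset[1,b[$ collapses $A_3\cap[1,b[$ to $A_3\cap[1,a[$, which yields exactly the displayed formula for $B_1$.

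I expect the main obstacle to be purely bookkeeping: keeping the index sets and their truncations to the initial segments $[1,a[$ and $[1,b[$ straight, and in particular being careful in the second statement that intersecting a complement taken within $[1,b[$ by the smaller segment $[1,a[$ returns the complement taken within $[1,a[$. No new structural input about $Z$ is needed beyond the preceding proposition; both items are consequences of unwinding its criteria for the appropriate pairs of factors.
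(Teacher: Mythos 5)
Your proposal is correct and follows essentially the same route as the paper: both parts are obtained by writing $u$, $v$, $w$ in canonical form, computing the canonical form of $uv$ (respectively invoking the factor criterion for $uw$ and $vw$), and then checking the set-theoretic conditions of the preceding proposition for the pairs $(uv,w)$ and $(u,v)$. The key observations — that the $v$-part of $uv$ is still $B_2$ so the factor condition for $(uv)w$ coincides with that for $vw$, and that intersecting $B_2=[1,k(v)[\backslash(A_3\cap[1,k(v)[)$ with $[1,k(u)[$ recovers $B_1$ — are exactly those of the paper.
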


\begin{proof}
Write $u=u_{A_1}x_{k(u)}v_{B_1}$, $v=u_{A_2}x_{k(v)}v_{B_2}$ and $w=u_{A_3}x_{k(w)}v_{B_3}$.

From the formula obtained for $uv$, and the fact that $vw$ being a factor only relies on relations between $k(v)$, $B_2$ and $A_3$, we see that, assuming that $uv$ is a factor of $Z$, the word $vw$ is a factor of $Z$ if and only if $uvw$ is a factor of $Z$. Moreover we have :
\begin{center}
	$uvw=u_{A_1\cup \{k(u)\}\cup (A_2\cap ]k(u),k(v)[)\cup \{k(v)\}\cup (A_3\cap ]k(v),k(w)[)}x_{k(w)}v_{B_3}$.
\end{center}

The second statement is similar : $uw$ is a factor of $Z$ implies $k(u)\notin A_3$, and since $B_2=[1,k(v)[\backslash (A_3 \cap [1,k(v)[)$ so that $k(u)\in B_2$. Moreover,
\begin{center}
	$B_2\cap [1,k(u)[=[1,k(u)[\backslash A_3\cap [1,k(u)[= B_1$
\end{center}
showing that $u$ is a suffix of $v$.
\end{proof}

Recall that if $A, B$ are finite subsets of $\mathbb{N}$, we write $A<B$ if $\max A < \min B$. We say that $A\subset\mathbb{N}$ is an interval if there are $k\leq l\in\mathbb{N}$ such that $A=[k,l]$.

\begin{lem}
Let $(A_n)_{n\geq 1}$ be a sequence of finite subsets of $\mathbb{N}^*$ with $A_n<A_{n+1}$ for all $n\geq 1$. Then the infinite word
\begin{center}
	$Y=\displaystyle \prod_{n\geq 0}u_{A_n}$
\end{center}
is a suffix of $Z$ if and only if $\exists N\in\mathbb{N}, \forall n\geq N$, $A_n$ is an interval and $\min A_{n+1} = 1+\max A_n$.
\end{lem}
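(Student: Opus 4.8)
The plan is to translate ``$Y$ is a suffix of $Z$'' into a statement about the first-occurrence function $A(\cdot)$ of the reductions section, and then into elementary binary arithmetic. Write $P_n=u_{A_1}u_{A_2}\cdots u_{A_n}$ for the partial products, so that $Y=\lim_n P_n$ and $|P_n|\to\infty$. Since $Z$ involves infinitely many letters it is non-ultimately periodic; hence by the Proposition on non-ultimately periodic words, for any suffix $T^j(Z)$ every sufficiently long prefix has first occurrence exactly $j$. I will use this through the equivalence: $Y$ is a suffix of $Z$ if and only if the sequence $(A(P_n))_n$ is eventually constant --- if $A(P_n)=j$ for all large $n$ the nested occurrences at position $j$ glue into $Y=T^j(Z)$, and conversely a suffix forces $A(P_n)=j$ for $n$ large. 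For $A(P_n)$ to be defined I first check that every $P_n$ is a factor of $Z$: the base words $u_{A_1}$ are factors (proper suffixes of some $u_N$), the hypothesis $A_n<A_{n+1}$ makes each $u_{A_n}u_{A_{n+1}}$ a factor automatically (in the criterion of the Proposition on products both conditions reduce to $\min A_{n+1}>\max A_n$), and Corollary part~1 then gives all $P_n$ by induction.

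Next I compute the canonical form of $P_n$. Put $U_n=A_1\cup\cdots\cup A_n$ and $m_n=\max A_n=\max U_n$. Starting from $u_{m_n}=x_{m_n}Z_{m_n-1}$ and $Z_{m_n-1}=v_{[1,m_n[}$, one gets the base identity $u_{A_n}=u_{A_n\setminus\{m_n\}}\,x_{m_n}\,v_{[1,m_n[}$, and an induction on $n$ using the product formula of the Proposition yields
\[
P_n=u_{U_n\setminus\{m_n\}}\,x_{m_n}\,v_{[1,m_n[}.
\]
The inductive step multiplies $P_n$ on the right by $u_{A_{n+1}}$; since $A_{n+1}\subseteq\,]m_n,m_{n+1}]$, the set $A_{n+1}\setminus\{m_{n+1}\}$ is exactly the part of the new $A$-index landing in $]m_n,m_{n+1}[$, and the right factor stays $v_{[1,m_{n+1}[}$.

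I then convert this into a closed form for $A(P_n)$. The letter $x_{m_n}$ first occurs at the centre of the initial block $Z_{m_n}$, i.e. at position $2^{m_n-1}-1$, and $P_n$ carries $u_{U_n\setminus\{m_n\}}$ in front of its unique $x_{m_n}$; shifting left by $|u_{U_n\setminus\{m_n\}}|=\sum_{i\in U_n,\ i<m_n}2^{i-1}$ and using $|u_i|=2^{i-1}$ gives
\[
A(P_n)=2^{m_n}-1-\sum_{i\in U_n}2^{i-1}.
\]
Because $U_{n+1}=U_n\sqcup A_{n+1}$ with $A_{n+1}\subseteq[\,m_n+1,m_{n+1}\,]$ and $m_{n+1}\in A_{n+1}$, the increment is
\[
A(P_{n+1})-A(P_n)=2^{m_{n+1}}-2^{m_n}-\sum_{i\in A_{n+1}}2^{i-1}\ \ge\ 0,
\]
vanishing exactly when $\sum_{i\in A_{n+1}}2^{i-1}=\sum_{i=m_n+1}^{m_{n+1}}2^{i-1}$, i.e. when $A_{n+1}=[\,m_n+1,m_{n+1}\,]$. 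Thus $(A(P_n))$ is eventually constant if and only if, from some rank on, every $A_{n+1}$ is an interval with $\min A_{n+1}=1+\max A_n$, which is the stated condition; together with the first paragraph this proves the lemma.

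The main obstacle I anticipate is the bookkeeping in the middle two steps: proving the canonical form of the partial products (tracking that the left index set is $U_n\setminus\{m_n\}$ while the right factor remains $v_{[1,m_n[}$ at every stage), and justifying that the occurrence of $P_n$ straddling the first $x_{m_n}$ is really its first occurrence. Once these are in place the conclusion is the purely arithmetic remark that a sum of distinct powers of two indexed by a subset of $[\,m_n+1,m_{n+1}\,]$ containing $m_{n+1}$ is maximal precisely for the full interval.
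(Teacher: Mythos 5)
Your proof is correct, but it takes a genuinely different route from the paper. The paper's proof works inside the subshift: it shows that every $Y\in\Omega(Z)$ has a \emph{unique} greedy decomposition $Y=\prod_{m\in M}u_m$ for an infinite $M\subset\mathbb{N}^*$ (read off the first letter, peel off $u_{m_1}$, repeat), observes that the suffixes of $Z$ are exactly the $Y_M$ with $M\supset[a,+\infty[$ for some $a$, and then identifies $M=\bigcup_n A_n$, so the tail condition on $M$ translates directly into the stated interval condition. You instead compute the first-occurrence position of each partial product $P_n=u_{A_1}\cdots u_{A_n}$ explicitly: the canonical form $P_n=u_{U_n\setminus\{m_n\}}x_{m_n}v_{[1,m_n[}$ (which I checked follows correctly from Proposition 9(2) by induction, using $A_{n+1}\subset\,]m_n,m_{n+1}]$), the formula $A(P_n)=2^{m_n}-1-\sum_{i\in U_n}2^{i-1}$, and the observation that the increments are nonnegative and vanish exactly when $A_{n+1}$ is the full interval $[1+\max A_n,\max A_{n+1}]$; eventual constancy of $A(P_n)$ is then equivalent to $Y$ being a suffix. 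Both of the bookkeeping points you flagged do go through: $u_{U_n\setminus\{m_n\}}$ is a suffix of $Z_{m_n-1}=v_{[1,m_n[}$ by Proposition 7(2), so the occurrence straddling the first $x_{m_n}$ exists, and any other occurrence must align the unique $x_{m_n}$ of $P_n$ with a later $x_{m_n}$ of $Z$ and hence starts at position at least $2^{m_n}$. What the paper's argument buys is brevity and a structural by-product (the classification of $\Omega(Z)$ and of the suffixes of $Z$); what yours buys is a quantitative refinement --- the deficit $2^{m_{n+1}}-2^{m_n}-\sum_{i\in A_{n+1}}2^{i-1}$ measures exactly how much the starting position drifts --- and tighter integration with the first-occurrence machinery of the reductions section, at the cost of more 2-adic bookkeeping. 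For the forward implication you could even skip the appeal to the proposition on non-ultimately periodic words: if $Y=T^j(Z)$ then each $P_n$ is a prefix of $T^j(Z)$, so $A(P_n)\le j$, and a bounded nondecreasing integer sequence is eventually constant.
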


\begin{proof}
Notice first that 
\begin{center}
	$Z=\displaystyle \prod_{i\geq 1} u_i$.
\end{center}
If $M\subset \mathbb{N}^*$ is an infinite set, then the infinite word
\begin{center}
	$Y_M=\displaystyle \prod_{m\in M} u_m$
\end{center}
belongs to $\Omega(Z)$. Conversely, for $Y\in \Omega(Z)$, there exists $M\subset \mathbb{N}^*$ such that $Y=Y_M$. To see this, build $M=\{m_1<m_2<\ldots\}$ as follows. Let $x_{m_1}$ be the first letter of $Y$. By the recursive definition of $Z$ we see that $u_{m_1}$ is a prefix of $Y$. Let $Y_1=Y$ and $Y_2$ be the suffix of $Y$ starting where the prefix $u_{m_1}$ of $Y_0$ ends. If $x_{m_2}$ is the first letter of $Y_1$, we must have $m_2>m_1$ and $u_{m_2}$ is a prefix of $Y_2$, so that $u_{m_1}u_{m_2}$ is a prefix of $Y$. If $u_{m_1}u_{m_2}\ldots u_{m_k}$ is a prefix of $Y$ with $m_1<m_2<\ldots<m_k$ and $Y_{k+1}$ is defined as the suffix of $Y$ where $u_{m_1}u_{m_2}\ldots u_{m_k}$ ends, then let $m_{k+1}$ be such that $Y_{k+1}$ starts with the letter $x_{m_{k+1}}$. We must have $m_{k+1}>m_k$, and $u_{m_{k+1}}$ is a prefix of $Y_{k+1}$, so that $u_{m_1}u_{m_2}\ldots u_{m_k}u_{m_{k+1}}$ is a prefix of $Y$. The infinite set $M=\{m_1<m_2<m_3<\ldots\}$ defined this way is such that $Y=Y_M$. Moreover, this construction shows that $M$ is uniquely determined by $Y$.

Since every suffix of $Z$ is of the form $Y_M$ for some infinite $M\subset \mathbb{N}^*$ such that $\exists a\in \mathbb{N}^*$ with $[a,+\infty[\subset M \subset \mathbb{N}^*$, we see that if
\begin{center}
	$Y=\displaystyle \prod_{n\geq 0}u_{A_n}$
\end{center}
with $A_n<A_{n+1}$ for all $n\geq 1$, we must have
\begin{center}
	$\exists a\in \mathbb{N}^*$, \ such that \ $[a,+\infty[\subset \displaystyle \bigcup_{n\geq 1} A_n \subset \mathbb{N}^*$
\end{center}
proving the lemma.

\end{proof}

\begin{defi}
Let $u=u_Ax_{k(u)}v_B$ be a factor of $Z$, and set $\eta(u)=\max([1,k(u)[\backslash A)$, with the convention that $\eta(u)=0$ if $A=[1,k(u)[$. Let $C$ be the coloring defined by :
\begin{itemize}
	\item $C(u)=\textbf{red}$ if $A\cap [1,\eta(u)[=[1,\eta(u)[\backslash(B\cap [1,\eta(u)[)$,
	\item $C(u)=\textbf{blue}$ otherwise.
\end{itemize}
If $u$ is not a factor of $Z$, then we set $C(u)=C_{NF}(u)$ where the coloring $C_{NF}$ is defined in section $2$ with $x=Z$.
\end{defi}

\begin{prop}
Let $A\subset \mathbb{N}$ be a finite subset. Then $u_A$ is \textbf{red} if and only if $A$ is an interval.
\end{prop}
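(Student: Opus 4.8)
The plan is to reduce everything to the canonical form of Proposition on factors: first write $u_A$ in the shape $u_{\mathcal{A}}x_{k(u_A)}v_{\mathcal{B}}$, then read off $\eta(u_A)$ and watch the defining condition of \textbf{red} collapse.

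First I would determine the canonical decomposition of $u_A$. Set $m=\max A$. Since $u_m=x_mu_1u_2\cdots u_{m-1}$ and each $u_i$ with $i<m$ uses only the letters $x_1,\dots,x_i$, the letter $x_m$ occurs in $u_A=u_{A\cap[1,m[}\,u_m$ exactly once, as the first letter of the final block $u_m$; hence $k(u_A)=m$. Applying the identity $Z_{m-1}=v_{[1,m[\setminus D}\,u_D$ from the second part of the proposition on suffixes of $u_n$, once with $D=\emptyset$ and once with $D=[1,m[$, yields $u_1u_2\cdots u_{m-1}=v_{[1,m[}$. Therefore
\begin{center}
	$u_A=u_{A\cap[1,m[}\,x_m\,v_{[1,m[}$,
\end{center}
which by the uniqueness of the canonical form is exactly the decomposition $u_A=u_{\mathcal{A}}x_{k(u_A)}v_{\mathcal{B}}$ with $\mathcal{A}=A\cap[1,m[$ and $\mathcal{B}=[1,m[$. (In passing, $A\subseteq[1,m+1[$, so by the first part of that proposition $u_A$ is a proper suffix of $u_{m+1}$, hence a genuine factor of $Z$; this is what guarantees that the colour of $u_A$ is given by the $\eta$-rule and not by $C_{NF}$.)

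Next I would compute $\eta(u_A)=\max([1,m[\setminus\mathcal{A})=\max([1,m[\setminus A)$ and substitute it into the definition of \textbf{red}. Because $\mathcal{B}=[1,m[\supseteq[1,\eta(u_A)[$, we get $\mathcal{B}\cap[1,\eta(u_A)[=[1,\eta(u_A)[$, so the right-hand side $[1,\eta(u_A)[\setminus(\mathcal{B}\cap[1,\eta(u_A)[)$ is empty, while the left-hand side is $\mathcal{A}\cap[1,\eta(u_A)[=A\cap[1,\eta(u_A)[$. Thus $u_A$ is \textbf{red} if and only if $A\cap[1,\eta(u_A)[=\emptyset$.

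Finally I would translate the condition $A\cap[1,\eta[=\emptyset$ (writing $\eta=\eta(u_A)$) into the interval property. Whenever $\eta\geq 1$ we have $\eta\notin A$ by the very definition of $\eta$, so $A\cap[1,\eta[=\emptyset$ amounts to $\min A>\eta$. If $A=[a,m]$ is an interval, then $[1,m[\setminus A=[1,a[$, so $\eta=a-1<a=\min A$ and the condition holds, giving \textbf{red}; conversely, if $A$ is not an interval there is a missing element $g$ with $\min A<g<m$, whence $\eta\geq g>\min A$, so $\min A\in A\cap[1,\eta[$ and the condition fails, giving \textbf{blue}. This is precisely the claim, the degenerate cases (a singleton, or $A=[1,m]$ with $\eta=0$) being absorbed by the convention $\eta(u)=0$ and giving \textbf{red} as they should.

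I expect the main obstacle to be the canonical-form step — in particular pinning down that the trailing factor is $v_{[1,m[}$, equivalently $u_1\cdots u_{m-1}=v_{[1,m[}$, so that $\mathcal{B}$ is the \emph{full} interval $[1,m[$. This is the crucial feature: once $\mathcal{B}=[1,m[$ forces the right-hand side of the \textbf{red} condition to vanish, the colour reduces to the transparent criterion $A\cap[1,\eta[=\emptyset$, and everything that remains is bookkeeping about intervals and the $\eta=0$ convention.
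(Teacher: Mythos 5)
Your proposal is correct and follows essentially the same route as the paper: both reduce $u_A$ to the canonical form $u_{A\setminus\{\max A\}}\,x_{\max A}\,v_{[1,\max A[}$, observe that the right-hand side of the \textbf{red} condition vanishes because the $v$-part is the full interval $[1,\max A[$, and then identify the surviving condition $A\cap[1,\eta(u_A)[=\emptyset$ with $A$ being an interval. Your version merely spells out a few details the paper leaves implicit (that $k(u_A)=\max A$, that $u_A$ is genuinely a factor of $Z$, and the degenerate cases), so nothing further is needed.
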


\begin{proof}
Let $k=k(u_A)=\max A$, and set $A_0=A\backslash \{k\}$. We have
\begin{center}
	$u_A=u_{A_0}x_{k}v_{[1,k[}$
\end{center}
so that $u_A$ is \textbf{red} if and only if $A_0\cap[1,\eta(u_A)[=\emptyset$. But $\eta(u_A)=\max([1,k[\backslash A_0)$, so that $A_0\cap[1,\eta(u_A)[=\emptyset$ if and only if $A_0=]\eta(u_A),k[$, meaning that $A=]\eta(u_A),k]$ is an interval.
\end{proof}

\begin{theo}
The Zimin word $Z$ admits no suffix $Y$ having a super-monochromatic factorisation.
\end{theo}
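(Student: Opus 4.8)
The plan is to assume, for contradiction, that some suffix $Y$ of $Z$ carries a super-monochromatic factorisation $Y=w_1w_2w_3\cdots$ for the coloring $C$, and to use the hypothesis that $Y$ is a genuine suffix of $Z$ to force two members of the monochromatic set to receive different colours. First I would normalise the factorisation. Since $C$ agrees with $C_{NF}$ on non-factors, the $C_{NF}$-reduction Proposition lets me pass to a tail of the factorisation in which every product $w_{n_1}\cdots w_{n_k}$ is a factor of $Z$; each block is then a factor, so I write $w_i=u_{A_i}x_{k_i}v_{B_i}$ with $k_i=k(w_i)$. Because in $Z$ two occurrences of a letter $x_k$ are always separated by a larger letter, no two blocks can share a value $k_i$ (else $w_iw_j$ would display $x_{k_i}$ twice with nothing larger between), so the $k_i$ are pairwise distinct; as $Z$ is recurrent, $Y$ contains every letter and $\sup_i k_i=\infty$.

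The decisive enabling step is to upgrade this to $k_1<k_2<\cdots$. If $k_{i+1}<k_i$, I would pick a later block $w_j$ ($j>i+1$) with $k_j>k_i$ and apply the Corollary to $w_{i+1},w_i,w_j$ to conclude that $w_{i+1}$ is a suffix of $w_i$; writing $w_i=\alpha w_{i+1}$ makes $w_iw_{i+1}=\alpha\,w_{i+1}w_{i+1}$ a factor of $Z$ in which $x_{k_{i+1}}$ occurs twice with no larger letter between, a contradiction. With $k_i$ increasing, the same Corollary (using any later block) gives that $w_a$ is a suffix of $w_b$ for all $a<b$, and since the suffix test depends only on $k_n$ and $B_n$, I obtain the full suffix property: $w_1\cdots w_n$ is a suffix of $w_{n+1}$.

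I then set up coordinates. The suffix characterisation yields $k_i\in B_{i+1}$ and $B_{i+1}\cap[1,k_i[\,=B_i$, so with $B_\infty=\bigcup_i B_i$ one has $B_i=B_\infty\cap[1,k_i[$ and $k_i\in B_\infty$; the product test for $w_iw_{i+1}$ gives $k_i\notin A_{i+1}$ and $A_{i+1}\cap[1,k_i[\,=[1,k_i[\,\setminus B_\infty$, so that $A_i$ is pinned down below $k_{i-1}$ and only $D_i:=A_i\cap\,]k_{i-1},k_i[$ remains free. Two computations now drive the contradiction. On one side, iterating the product formula gives $w_iw_{i+2}=u_S\,x_{k_{i+2}}v_{B_{i+2}}$ with $k_i\in S$ but $k_{i+1}\notin S$, whence $\eta(w_iw_{i+2})\ge k_{i+1}>k_i$; since $k_i\in B_\infty$, the \textbf{red} condition fails at the index $k_i$, so $w_iw_{i+2}$ is \textbf{blue} for every $i$ in the tail, and the monochromatic colour must be \textbf{blue}.

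On the other side I claim the single blocks are eventually \textbf{red}, and this is where being a suffix of $Z$ enters. Writing $w_1\cdots w_n=u_{S_n}x_{k_n}v_{B_n}$, the index sets $S_n$ increase with $u_{S_n}$ a prefix of $u_{S_{n+1}}$ (as $S_{n+1}$ adds only elements exceeding $\max S_n$ and $u_{A\cup B}=u_Au_B$ for $A<B$), so $u_{S_n}$ is a prefix of $Y$ and $Y=\prod_{s\in S_\infty}u_s$ is precisely the canonical $u$-decomposition of $Y$ with $S_\infty=\bigcup_n S_n$. By the Lemma, a suffix of $Z$ forces $[a,\infty[\,\subseteq S_\infty$; since an integer of $\,]k_{i-1},k_i[\,$ can only enter $S_\infty$ through $D_i$, cofiniteness forces $D_i=\,]k_{i-1},k_i[\,$ for all large $i$. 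For such $i$ one gets $\eta(w_i)=k_{i-1}$ and the \textbf{red} condition collapses to $A_i\cap[1,k_{i-1}[\,=[1,k_{i-1}[\,\setminus B_\infty$, which already holds; hence $w_i$ is \textbf{red}. Choosing $i$ large then exhibits a \textbf{red} block $w_i$ and a \textbf{blue} product $w_iw_{i+2}$ in the same monochromatic set, the desired contradiction. The main obstacle is this last step: identifying the partial products as the canonical factorisation $Y=\prod_{s\in S_\infty}u_s$ and invoking the Lemma to make $S_\infty$ cofinite, which is what forces the blocks to eventually ``fill their gaps'' and turn \textbf{red}; the remaining work is bookkeeping with $A_i,B_i,B_\infty$ and the product formula, the only delicacy being to keep the unconstrained first block $w_1$ out of the argument by always working in the tail.
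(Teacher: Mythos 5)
Your proof is correct and follows essentially the same route as the paper: reduce via the $C_{NF}$ coloring so that all products are factors, use the Corollary to get the suffix property, write each block as $u_{A_i}x_{k_i}v_{B_i}$, invoke the Lemma on the induced $u$-decomposition of $Y$ to force the gaps $]k_{i-1},k_i[$ into $A_i$ (hence $\eta(w_i)=k_{i-1}$ and $w_i$ is \textbf{red}), and then show the pairs $w_iw_{i+2}$ violate the \textbf{red} condition at the index $k_i$. The only substantive difference is that you explicitly justify the monotonicity of the $k(w_i)$ (via pairwise distinctness and the square argument), a point the paper passes over with an unexplained normalisation, and you phrase the endgame as ``$w_iw_{i+2}$ is \textbf{blue}'' rather than deriving $k(w_i)\in A_{i+2}$ from assuming it \textbf{red} --- the same computation either way.
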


\begin{proof}

Assume by contradiction that there exists a suffix $Y$ of $Z$ having a super-monochromatic factorisation
\begin{center}
	$Y=\displaystyle \prod_{n\geq 1}w_n$.
\end{center}
We assume that $k(w_n)\leq 2+k(w_{n+1})$ for all $n\geq 1$.

By proposition 5 we may assume that $\prod_{i\in A}w_i$ is a factor of $Z$ for all $A\subset \mathbb{N}^*$ finite. Let $n\geq 1$, and consider the factor $w_1w_2\ldots w_nw_{n+2}$ of $Z$. Since $w_{n+1}w_{n+2}$ is a factor of $Z$, we have by proposition 9 that $w_1w_2\ldots w_n$ is a suffix of $w_{n+1}$.

Write
\begin{center}
	$w_n=u_{A_n}x_{k(w_n)}v_{B_n}$
\end{center}
for all $n\geq 1$. We have
\begin{center}
	$Y=\displaystyle \prod_{n\geq1}u_{A_n}x_{k(w_n)}v_{B_n}=u_{A_1}\prod_{n\geq1}x_{k(w_n)}v_{B_n}u_{A_{n+1}}$
	\bigskip
	$\displaystyle Y=u_{A_1}\prod_{n\geq1}u_{\{k(w_n)\}\cup (A_{n+1}\cap]k(w_n),k(w_{n+1})[)}$.
\end{center}
By the Lemma, the sets $\{k(w_n)\}\cup (A_{n+1}\cap]k(w_n),k(w_{n+1})[)$ become intervals for large $n$. So that there exist $N\geq 1$ such that for all $n\geq N$,
\begin{center}
	$\{k(w_n)\}\cup (A_{n+1}\cap]k(w_n),k(w_{n+1})[)=[k(w_n),k(w_{n+1})[$
\end{center}
and $\emptyset \neq ]k(w_n),k(w_{n+1})[ \subset A_{n+1}$.
But the fact that $w_nw_{n+1}$ is a factor of $Z$ implies that $k(w_n)\notin A_{n+1}$, and all this shows that
\begin{center}
	$\eta(w_{n+1})=k(w_n)$.
\end{center} 
Now, since $w_n$ is a suffix of $w_{n+1}$, we have $B_{n+1}\cap[1,k(w_n)[=B_n$, and since $w_nw_{n+1}$ is a factor of $Z$, we have
\begin{center}
$A_{n+1}\cap [1,k(w_n)[ = [1,k(w_n)[ \backslash B_n = [1,k(w_n)[ \backslash (B_{n+1}\cap[1,k(w_n)[)$
\end{center}
and with the fact that $\eta(w_{n+1})=k(w_n)$ we see that $w_{n+1}$ is \textbf{red}. Thus the factorisation is super-monochromatic with respect to the color \textbf{red}.

This implies that $w_nw_{n+2}$ is \textbf{red}. We have
\begin{center}
	$w_nw_{n+2}= u_{A_n\cup\{k(w_n)\}\cup(A_{n+2}\cap]k(w_n),k(w_{n+2})[)}x_{k(w_{n+2})}v_{B_{n+2}}$.
\end{center}
But $]k(w_{n+1}),k(w_{n+2})[ \subset A_{n+2}$ and $k(w_{n+1})\notin A_{n+2}$, so that $\eta(w_nw_{n+2})=k(w_{n+1})$. By the red condition, we have
\begin{align*}
	& \ A_n\cup\{k(w_n)\}\cup(A_{n+2}\cap]k(w_n),k(w_{n+1})[) \\
	=& \ [1,k(w_{n+1})[\backslash (B_{n+2}\cap[1,k(w_{n+1})[) \\
	=& \ [1,k(w_{n+1})[\backslash B_{n+1} \\
	=& \ A_{n+2}\cap [1,k(w_{n+1})[
\end{align*}
and we see that $k(w_n)\in A_{n+2}$. But since $w_nw_{n+2}$ is a factor of $Z$, $k(w_n)\notin A_{n+2}$, which is a contradiction.

\end{proof}

We end this section by producing a coloring for the doubling-period word $D$, with two colors, such that $D$ admits no suffix having a super-monochromatic factorisation. We mention \cite{damanik} for a computation of squares in the doubling-period word.

\begin{defi}
The doubling-period word $D$ is the infinite word over the alphabet $\mathcal{A}=\{0,1\}$ defined as $D=\psi(Z)$ where $\psi$ is the morphism $\mathcal{A}_x^+ \rightarrow \mathcal{A}^+$ defined by $\psi(x_n)=0$ if $n\geq 1$ is odd, and $\psi(x_n)=1$ is $n$ is even. We have :
\begin{center}
	$D=01000101010001000100010\ldots$
\end{center}
\end{defi}

For a factor $u$ of $D$, define the sets $\psi^{-1}(u)=\{V \ \text{ factor of } Z | \  \psi(V)=u\}$ and $\psi_k^{-1}(u)=\{V \in \psi^{-1}(u) \ |\  k(V)=k \}$.

Let $W(u)$ be the element $V$ of $\psi^{-1}(u)$ such that $A_Z(V)$ is minimal. By minimality and existence, we have $A_Z(W(u))=A(u)$.

Let $C_Z$ be the coloring answering the conjecture for the Zimin word. We define the coloring $C$ on the set of factors of $D$ by
\begin{center}
	$C(u)=C_Z(W(u))$
\end{center}

\begin{theo}
No suffix of $D$ admits a super-monochromatic factorisation for the coloring defined above.
\end{theo}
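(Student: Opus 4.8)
The plan is to reduce the statement to the theorem already proved for the Zimin word $Z$, by lifting a hypothetical super-monochromatic factorisation of a suffix of $D$ to a factorisation of a suffix of $Z$. The structural feature that makes this possible is that $\psi$ is letter-to-letter, hence length-preserving: $D=\psi(Z)$ letter by letter, and therefore $T^k(D)=\psi(T^k(Z))$ for every $k\geq 0$. Consequently, given any factorisation $T^k(D)=\prod_{n\geq 1}w_n$, one obtains a \emph{positional lift} $T^k(Z)=\prod_{n\geq 1}W_n$ by cutting $T^k(Z)$ at the same places: each $W_n$ is the factor of $Z$ occupying the slot of $w_n$, so that $|W_n|=|w_n|$ and $\psi(W_n)=w_n$. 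I will assume, for contradiction, that some suffix $T^k(D)=\prod_n w_n$ is super-monochromatic for $C$, say of colour $c$.

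After invoking the reductions of Section 3 — Proposition 5 to assume every concatenation $\prod_{i\in A}w_i$ is a factor of $D$, Proposition 6 for the suffix property, and Proposition 3 for the (non-ultimately-periodic) word $D$ — the defining relation $C(u)=C_Z(W(u))$ together with the identity $A_Z(W(u))=A(u)$ controls the lift on prefixes. Indeed, Proposition 3 applied to $D$ gives $A(w_1\cdots w_n)=k$ for all large $n$, so that $W(w_1\cdots w_n)=\mathbb{P}_{|w_1\cdots w_n|}(T^k(Z))=W_1\cdots W_n$: on prefix-concatenations the minimal preimage and the positional lift coincide. Hence $\prod_n W_n$ is a genuine factorisation of the suffix $T^k(Z)$ of $Z$, and each of its prefix-concatenations has $C_Z$-colour $c$.

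The step I expect to be the main obstacle is to promote this from prefixes to \emph{gappy} concatenations, which is exactly what a super-monochromatic factorisation demands. What the hypothesis controls is $C_Z\big(W(w_{n_1}\cdots w_{n_j})\big)=c$, the colour of the \emph{minimal} preimage of $w_{n_1}\cdots w_{n_j}$, whereas the theorem on the Zimin word must be fed the colour $C_Z(W_{n_1}\cdots W_{n_j})$ of the \emph{positional} lift. These are two preimages of the same factor of $D$, but $\psi$ only remembers the parities of the letter indices, so they generically sit at different places in $Z$, with different data $u_Ax_{k}v_B$, and $C_Z$ genuinely depends on that data: already $\psi(x_1)=\psi(x_3)=0$ while $C_Z(x_1)=\textbf{red}$ and $C_Z(x_3)=\textbf{blue}$. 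Thus one cannot simply identify the two colours, and the naive lift does not by itself produce a super-monochromatic factorisation of $Z$.

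To overcome this I would not argue through the abstract Zimin theorem but re-run its proof intrinsically, transporting the factor calculus of $Z$ (Propositions 8 and 9 and their Corollary, the Lemma, and Proposition 10) to $D$ through $W$. Concretely: first argue that $c=\textbf{red}$, by showing, via the Lemma and the criterion ``$u_A$ is \textbf{red} iff $A$ is an interval'' of Proposition 10, that the relevant lifted pieces are \textbf{red}; then reach the contradiction through the single gappy object $w_nw_{n+2}$, whose minimal preimage $W(w_nw_{n+2})$ is \textbf{red} by hypothesis, by tracking the membership of $k(\cdot)$ in the index set $A_{n+2}$ exactly as in the final step of the Zimin proof, where ``$w_nw_{n+2}$ a factor'' forces $k(w_n)\notin A_{n+2}$ while the \textbf{red} condition forces $k(w_n)\in A_{n+2}$. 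The delicate point throughout is to show that, for these specific consecutive-with-one-gap patterns and under the first-occurrence normalisation, the combinatorial quantities attached to $W(w_nw_{n+2})$ are the ones read off from the positional lift, so that the colour computation carried out inside $Z$ is precisely the one the hypothesis on $D$ actually pins down.
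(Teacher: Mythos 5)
Your setup is sound (the positional lift along the letter-to-letter morphism $\psi$, the identity $W(w_1\cdots w_n)=W_1\cdots W_n$ on long prefixes via Proposition 3, and the reductions of Section 3), and you have correctly diagnosed the one real difficulty: the hypothesis only controls $C_Z\big(W(w_{n_1}\cdots w_{n_j})\big)$, the colour of the \emph{minimal} preimage of a gappy concatenation, and there is no a priori reason this should match the colour of the positional lift $W_{n_1}\cdots W_{n_j}$. But having named the obstacle, you do not remove it: both your fallback plan (re-running the Zimin proof intrinsically) and the direct reduction need to know the $u_Ax_kv_B$-data of $W(w_{n_1}\cdots w_{n_j})$ in terms of the data of the individual $W(w_{n_i})$, and you defer exactly this to ``the delicate point throughout''. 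As it stands the argument therefore has a genuine gap at its load-bearing step.

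The paper closes that gap with a structural analysis of $\psi^{-1}(u)$ which you would need to supply. First, for $u$ long enough that $W(u)=u_Ax_{k(W(u))}v_B$ contains the letters $x_{k(W(u))-2}$, $x_{k(W(u))-1}$, $x_{k(W(u))}$, every $V\in\psi^{-1}(u)$ has the form $V=u_Ax_{k(W(u))+2m}v_B$ for some $m\geq 0$: the positions of $x_1$ in any preimage are forced, then inductively those of all $x_l$ with $l\leq k(W(u))-2$, and the parity constraint imposed by $\psi$ pins down everything except an even shift of the apex letter. Second, if $V_1V_2$ is a factor of $Z$ with $V_i=u_{A_i}x_{k(W(u_i))+2m_i}v_{B_i}$, the requirement that all smaller letters occur between the two apex letters forces $m_1=m_2=0$; hence $W(u_iu_j)=W(u_i)W(u_j)$ for $i\leq j-2$, and inductively $W(u_{n_1}\cdots u_{n_k})=W(u_{n_1})\cdots W(u_{n_k})$. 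With this multiplicativity the minimal preimage and the positional lift coincide on \emph{all} the relevant concatenations, so one can conclude directly from Theorem 4 (the Zimin case) rather than re-running its proof. Without some version of this lemma, your colour-transfer step does not go through.
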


\begin{proof}
Let $k\geq 0$ and
\begin{center}
	$y=T^k(D)=u_1u_2u_3\ldots$
\end{center}
be a suffix of $D$ and a super-monochromatic factorisation such that $k=A_D(u_1)=A_Z(W(u_1))$ and $B(u_i)=A(u_{i+1})$ for all $i\geq 1$.

Let $u$ be a factor of $D$ such that $A(u)=k$. Write $W(u)=u_Ax_{k(W(u))}v_B$ as a factor of $Z$. Assume that the three letters $x_{k(W(u))-2}$, $x_{k(W(u))-1}$ and $x_{k(W(u))}$ all appear in $W(u)$, with this order of apparition. Let $V_1$ and $V_2$ be two factors of $Z$ such that $\psi(V_1)=\psi(V_2)$, then in these words the occurrences of the letter $x_1$ coincide. Indeed, $|W(u)|\geq 3$ and it is easily seen that this inequality is optimal in order to find the possible positions of $x_1$ in $V_1$ and $V_2$. We can then erase the letters $x_1$ from $V_1$ and $V_2$ an proceed by induction to see that each occurrences of letters $x_l$ with $l\leq k(W(u))-2$ is uniquely determined in $V_1$ and $V_2$. So that the words $V_1$ and $V_2$ are equal to $W(u)$ up to the occurrences of the letters $x_{k(W(u))-1}$ and $x_{k(W(u))}$. But these two letters have different images through $\psi$, and since between two letters $x_l$ and $x_k$ all letters $x_j$ with $j< l$ occur, we see that $V_1$ and $W(u)$ are equal up to the letter $x_{k(W(u)}$. This means that if
\begin{center}
	$W(u)=u_Ax_{k(W(u))}v_B$
\end{center}
then $V=u_Ax_{k(W(u))+2m}v_B$ for some $m\geq 0$.

Now consider the factor $W(u_iu_j)$ of $Z$ with $i\leq j-2$. Write $W(u_iu_j)=V_1V_2$ with $\psi(V_1)=u_i$ and $\psi(V_2)=u_j$. Assume that in $W(u_i)$the three letters $x_{k(W(u_i))-2}$, $x_{k(W(u_i))-1}$ and $x_{k(W(u_i))}$ appear with this order of apparition. Write
\begin{center}
	$V_1=u_{A_1}x_{k(W(u_i))+2m}v_{B_1}$ and $V_2=u_{A_2}x_{k(W(u_j))+2m'}v_{B_2}$
\end{center}
Since $V_1V_2$ is a factor of $Z$, between the two letters $x_{k(W(u_i))+2m}$ and $x_{k(W(u_j))+2m'}$ must appear every letter $x_j$ with $j\leq \min\{k(W(u_i))+2m , k(W(u_j))+2m'\}$, we see that we me must have $m=m'=0$. Showing that $V_1=W(u_i)$ and $V_2=W(u_j)$, so that $W(u_iu_j)=W(u_i)W(u_j)$.

This shows inductively that $W(u_{n_1}u_{n_2}\ldots u_{n_k})=W(u_{n_1})W(u_{n_2})\ldots W(u_{n_k})$ for all $k \geq 1, n_1<n_2<\ldots < n_k$. But this implies that $Z$ has a super-monochromatic factorisation for the coloring $C_Z$, leading to a contradiction.

\end{proof}

\section{Consecutive length}

Let $x$ be a non-ultimately periodic word. In this section, we introduce and study the consecutive length $L(u)$ of a factor $u$ of $x$.

Let $u$ be a factor of $x$. A decomposition $u=v_1v_2\ldots v_l$ with $l\geq 1$ terms is said to be consecutive if
\begin{center}
	$A(v_1)=A(u)$, $B(v_l)=B(u)$ \ and \ $\forall i=1\ldots l-1, \ B(v_i)=A(v_{i+1})$.
\end{center}

Define the consecutive length $L(u)$ of a factor $u$ of $x$ as :
\begin{center}
	$L(u)=\max\{ l \ | \ u \text{ admits a consecutive decomposition with } l \text{ terms} \}$
\end{center}

A factor $v$ of $x$ is said to be irreducible if $L(v)=1$. A consecutive decomposition is said to be irreducible if every of its terms is irreducible.

\begin{prop}
A consecutive decomposition of $u$ with $L(u)$ terms is irreducible.
\end{prop}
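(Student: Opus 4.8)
The plan is to argue by contradiction, exploiting the simple principle that a consecutive decomposition can always be refined by splitting any reducible block, and that doing so strictly increases the number of terms. The key conceptual point I would rely on throughout is that $A$ and $B$ are intrinsic to each factor (they record its first occurrence in $x$), so the chaining conditions defining consecutiveness compose transitively when one block is subdivided.

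Concretely, I would suppose that $u = v_1 v_2 \cdots v_{L(u)}$ is a consecutive decomposition attaining the maximum $L(u)$, yet that it fails to be irreducible, so that some term $v_i$ has $L(v_i) \geq 2$. By definition of $L(v_i)$ there is a consecutive decomposition $v_i = w_1 w_2 \cdots w_m$ with $m = L(v_i) \geq 2$, satisfying $A(w_1) = A(v_i)$, $B(w_m) = B(v_i)$ and $B(w_j) = A(w_{j+1})$ for all $j$. Substituting this block back in yields the factorisation $u = v_1 \cdots v_{i-1} w_1 \cdots w_m v_{i+1} \cdots v_{L(u)}$, which has $L(u) - 1 + m \geq L(u) + 1$ terms.

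I would then verify that this refined factorisation is again consecutive. The outer endpoints are untouched: the first term is still $v_1$ with $A(v_1) = A(u)$ and the last is still $v_{L(u)}$ with $B(v_{L(u)}) = B(u)$ (the extreme cases $i = 1$ and $i = L(u)$ are handled identically, using $A(w_1) = A(v_i)$ and $B(w_m) = B(v_i)$). All interior conditions among the $v$'s and among the $w$'s already hold by hypothesis, so the only genuinely new seams are the two where the inserted block meets its neighbours. At the left seam one has $B(v_{i-1}) = A(v_i) = A(w_1)$, and at the right seam $B(w_m) = B(v_i) = A(v_{i+1})$; each follows by combining the consecutiveness of the $v$'s with that of the $w$'s.

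This produces a consecutive decomposition of $u$ with strictly more than $L(u)$ terms, contradicting the maximality in the definition of $L(u)$ (which is finite, being bounded by $|u|$ since each term has length at least $1$). Hence no term $v_i$ can be reducible, which is exactly the claim. The argument is short, and I do not expect a serious obstacle: the only thing requiring genuine care is the seam verification, and it reduces entirely to the observation that $A$ and $B$ depend only on the individual factors, so refining one block of a consecutive decomposition preserves consecutiveness while strictly raising the term count.
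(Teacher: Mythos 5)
Your proof is correct and follows the same route as the paper, which simply invokes ``maximality of $L(u)$'' without detail; you supply the refinement-and-seam-checking argument that this phrase compresses. Nothing further is needed.
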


\begin{proof}
In such a decomposition $u=v_1v_2\ldots v_{L(u)}$, we must have $L(v_i)=1$ for all $i=1\ldots L(u)$ by maximality of the value of $L(u)$. Notice also that $L(v_i\ldots v_j)=j-i+1$ for all $i\leq j$.
\end{proof}

\begin{prop}
Let $u,v$ be two factors of $x$ with $B(u)=A(v)$. Then 
\begin{center}
	$L(u)+L(v) \leq L(uv) \leq L(u)+L(v)+1$.
\end{center}
\end{prop}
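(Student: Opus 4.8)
The plan is to first pin down the first-occurrence data of the concatenation. From $B(u)=A(v)$ the word $uv$ occurs at position $A(u)$ (the occurrences of $u$ on $[A(u),B(u))$ and of $v$ on $[A(v),B(v))=[B(u),B(u)+|v|)$ are adjacent). Since $u$ is a prefix and $v$ a suffix of $uv$, the monotonicity relations recorded before Proposition~3 give $A(u)\le A(uv)$ and $B(v)\le B(uv)$, which combine with this occurrence to force $A(uv)=A(u)$ and $B(uv)=B(v)$. Next I would record the engine of the whole argument: in any consecutive decomposition $w=z_1\cdots z_m$, every contiguous block $z_i\cdots z_j$ is itself a consecutive decomposition of the factor it spells. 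Writing $p_0=A(w)$ and $p_t=B(z_t)$, the block occurs at $p_{i-1}$ so $A(z_i\cdots z_j)\le p_{i-1}$, while $z_i$ being a prefix gives $p_{i-1}=A(z_i)\le A(z_i\cdots z_j)$; the length identity $p_j-p_{i-1}=\sum_{t=i}^j|z_t|$ then squeezes the endpoints and forces the first occurrence to sit exactly on $[p_{i-1},p_j)$.

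For the lower bound I would simply splice optimal decompositions. Take a consecutive decomposition of $u$ with $L(u)$ terms and one of $v$ with $L(v)$ terms and concatenate. All internal junctions are inherited, the outer endpoints are $A(u)=A(uv)$ and $B(v)=B(uv)$, and the single new junction works precisely because the last piece of $u$ ends at $B(u)$ while the first piece of $v$ starts at $A(v)=B(u)$. Hence $uv$ has a consecutive decomposition with $L(u)+L(v)$ terms, giving $L(u)+L(v)\le L(uv)$ (and in particular $L(uv)\ge 2$).

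For the upper bound I would fix a consecutive decomposition $uv=z_1\cdots z_m$ with $m=L(uv)$, boundaries $p_0<\cdots<p_m$, and locate the split point $q=B(u)=A(v)$, which satisfies $p_0<q<p_m$. If $q=p_j$ is a boundary, then $z_1\cdots z_j$ and $z_{j+1}\cdots z_m$ are consecutive decompositions of $u$ and $v$ by the block lemma, so $m\le L(u)+L(v)$. The delicate case is $p_{j-1}<q<p_j$, where the split falls strictly inside $z_j$; write $z_j=z_j'z_j''$ with $z_j'$ the prefix ending at $q$ and $z_j''$ the suffix starting at $q$. Here the merging trick does the work: the word $z_{j-1}z_j'$ (content on $[p_{j-2},q)$) has $z_{j-1}$ as a prefix, which pins its first occurrence to $p_{j-2}$, so $u=z_1\cdots z_{j-2}(z_{j-1}z_j')$ is a consecutive decomposition of $u$ with $j-1$ terms, whence $L(u)\ge j-1$. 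Symmetrically $z_j''z_{j+1}$ has $z_{j+1}$ as a suffix, pinning its $B$-value to $p_{j+1}$ and hence its $A$-value to $q$, so $v=(z_j''z_{j+1})z_{j+2}\cdots z_m$ gives $L(v)\ge m-j$. When $j=1$ or $j=m$ the relevant inequality reads $L(u)\ge 0$ or $L(v)\ge 0$ and is trivial, and $m\ge 2$ from the lower bound rules out both occurring at once. Adding the two estimates, $m-1=(j-1)+(m-j)\le L(u)+L(v)$, i.e. $L(uv)\le L(u)+L(v)+1$.

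The main obstacle, and the source of the ``$+1$'', is exactly the straddling piece $z_j$: its prefix $z_j'$ and suffix $z_j''$ need not have their first occurrences at $p_{j-1}$ and $q$, so one cannot naively cut $z_j$ at $q$ and hand one half to $u$ and the other to $v$ as consecutive pieces. The merging trick circumvents this by absorbing each stub into the neighbouring block, whose prefix (resp.\ suffix) re-pins the first occurrence via the monotonicity relations; the cost is that the block containing $q$ is lost to both sides, which is precisely the single extra term.
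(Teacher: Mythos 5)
Your proof is correct and follows the same route as the paper: splice maximal consecutive decompositions for the lower bound, and for the upper bound locate the $u$/$v$ boundary inside a maximal decomposition of $uv$ and observe that at most one block is lost. Your merging trick (absorbing the straddling stubs $z_j'$ and $z_j''$ into the adjacent blocks and re-pinning the first occurrences via the prefix/suffix monotonicity of $A$ and $B$) actually supplies the justification for the inequalities $i\le L(u)$ and $L(uv)-i-1\le L(v)$, which the paper merely asserts ``by definitions of $L(u)$ and $L(v)$''.
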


\begin{proof}
Let $u=u_1u_2\ldots u_{L(u)}$ and $v=v_1v_2\ldots v_{L(v)}$ be maximal decompositions of $u$ and $v$. Then $uv=u_1u_2\ldots u_{L(u)}v_1v_2\ldots v_{L(v)}$ is a consecutive decomposition, proving the first inequality.

On the other hand, let $uv=w_1w_2\ldots w_{L(uv)}$ be a maximal decomposition of $uv$. Let $i$ be such that $w_1w_2\ldots w_i$ is a prefix of $u$ and $w_{i+2}\ldots w_{L}$ is a suffix of $v$. Then we have $i\leq L(u)$ and $L-i-1\leq L(v)$ by definitions of $L(u)$ and $L(v)$, so that $L(uv)-1\leq L(u)+L(v)$ proving the second inequality.
\end{proof}

\begin{prop}
Let $k\geq 0$. Then for all $l\geq 1$, there exists a factor $u$ of $x$ with $A(u)=k$ and $L(u)=l$.
\end{prop}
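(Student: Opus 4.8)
The plan is to reduce the statement to a one-dimensional monotonicity argument about the consecutive lengths of the prefixes of the suffix $y = T^k(x)$. The first observation is that a factor $u$ with $A(u) = k$ is necessarily a prefix of $y$: indeed $A(u) = k$ forces $u$ to occur at position $k$, hence $u = \mathbb{P}_{|u|}(T^k(x))$. Writing $p_n = \mathbb{P}_n(y)$, the condition $A(p_n) = k$ holds for every $n$ large enough by Proposition 3, and once it holds for some length it holds for all greater ones, since a prefix $v$ of $u$ satisfies $A(v) \le A(u)$ while $p_n$ occurring at $k$ forces $A(p_n) \le k$. Let $n_0$ be the least length with $A(p_{n_0}) = k$. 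It then suffices to prove that the map $n \mapsto L(p_n)$, defined for $n \ge n_0$, takes every value in $\{1, 2, 3, \ldots\}$.

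The combinatorial engine is the following monotonicity of first occurrences, which I would isolate as a lemma: if a factor $w = x_c \ldots x_{c'-1}$ satisfies $A(w) = c$, then every longer factor $x_c \ldots x_{c''-1}$ with $c'' \ge c'$ also satisfies $A = c$. This is immediate from $A(\text{shorter}) \le A(\text{longer})$ for the prefix relation, together with the fact that the longer factor still occurs at position $c$, so that $A(\text{longer}) \le c$. Encoding a consecutive decomposition of $p_n$ as a chain of cut positions $k = c_0 < c_1 < \cdots < c_l = k+n$ in which each piece $x_{c_{i-1}} \ldots x_{c_i - 1}$ first occurs at $c_{i-1}$, this lemma says that admissibility of a piece is preserved when its right endpoint is pushed further to the right.

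From here I would establish four facts. First, $L(p_{n_0}) = 1$: any nontrivial consecutive splitting of $p_{n_0}$ would exhibit a strictly shorter prefix with first occurrence $k$, contradicting minimality of $n_0$. Second, $L(p_{n+1}) \ge L(p_n)$: starting from an optimal decomposition of $p_n$, extend its last piece to absorb the extra letter, which stays admissible by the lemma. Third, $L(p_n) \to +\infty$: iterating the choice of the shortest admissible piece at the current position produces a consecutive decomposition of an ever longer prefix into arbitrarily many irreducible pieces. Fourth, and this is the delicate step, $L(p_{n+1}) \le L(p_n) + 1$: given an optimal decomposition of $p_{n+1}$ with cuts $c_0 < \cdots < c_m = k+n+1$, I delete the last letter. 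If $c_{m-1} = k+n$ the first $m-1$ pieces already decompose $p_n$; otherwise the shortened last piece $x_{c_{m-1}} \ldots x_{k+n-1}$ may fail to be admissible, and the remedy is to merge it with the previous piece: by the lemma the block $x_{c_{m-2}} \ldots x_{k+n-1}$ is admissible, yielding a decomposition of $p_n$ into $m-1$ pieces. In all cases $L(p_n) \ge m-1$, the case $m = 1$ being trivial.

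Combining the four facts, $n \mapsto L(p_n)$ is a nondecreasing integer sequence starting at $1$, tending to $+\infty$, with consecutive increments in $\{0,1\}$; it therefore attains every positive integer. Since each $p_n$ with $n \ge n_0$ is a factor with $A(p_n) = k$, this produces for every $l \ge 1$ a factor $u = p_n$ with $A(u) = k$ and $L(u) = l$, which is the claim. The main obstacle is the fourth fact: the naive deletion of the last letter can destroy the first-occurrence property of the final piece, and the merging trick, legitimised by the length-monotonicity lemma, is precisely what keeps the loss bounded by a single piece.
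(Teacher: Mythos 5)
Your argument is correct, and while it shares its first half with the paper, the mechanism by which you hit the value $l$ exactly is genuinely different. Both proofs get unboundedness of $L$ from Proposition 3 (every sufficiently long block starting at a given position first occurs there, so one can chain admissible pieces indefinitely). For the exactness step, the paper takes any factor $u$ with $A(u)=k$ and $L(u)\ge l$, fixes a maximal consecutive decomposition $u=v_1v_2\cdots v_{L(u)}$, and simply truncates it to $v_1\cdots v_l$; the point, recorded in the proof of Proposition 11, is that sub-blocks of a maximal decomposition have exact consecutive length $L(v_i\cdots v_j)=j-i+1$. You instead run a discrete intermediate-value argument on the prefixes $p_n$ of $T^k(x)$: the sequence $L(p_n)$ starts at $1$, is nondecreasing, tends to infinity, and increases by at most one per step. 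Your fourth fact is the only delicate point and you handle it correctly: deleting the last letter can destroy the admissibility of the final piece, and merging it into the penultimate piece (justified by your right-extension lemma) loses at most one term. Note that this bound is not an instance of Proposition 12, since $B(p_n)=A(x_{k+n})$ generally fails, so the merging trick is doing real work. The paper's route is shorter once Proposition 11 is in place; yours is self-contained and makes explicit the fact that no intermediate value of $L$ can be skipped along the prefixes of a fixed suffix.
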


\begin{proof}
We first show that the result is true for arbitrary large $l\geq 1$. For any factorisation $T^k(x)=u_1u_2u_3\ldots$, if the $(u_i)$'s are long enough then this factorisation is consecutive. This shows that $\lim_i(u_1u_2\ldots u_i) = +\infty$.

For the remaining $l\geq 1$, let $u$ be a factor of $x$ with $L(u)\geq l$ and $A(u)=k$. Let $u=v_1v_2\ldots v_{L(u)}$ be a maximal consecutive decomposition of $u$. Then $L(v_1v_2\ldots v_l)=l$, proving the statement.

\end{proof}

\section{Case of arbitrary large square free words}

In this section we use the consecutive length to provide a coloring answering the conjecture for infinite words not containing arbitrary large squares. We mention the construction in \cite{shallit} of a cube-free word over a 2-letter alphabet not containing arbitrary large squares. 

\bigskip

Let $u$ be a factor of $x$. Define the four sets :

\begin{itemize}
	\item $\lambda_+(u)=\{ v \text{ irreducible with }B(vu)=B(u) \ | \  B(v)=A(u) \}$
	\item $\lambda_-(u)=\{ v \text{ irreducible with }B(vu)=B(u) \ | \  B(v)<A(u) \}$

	\item  $\rho_+(u)=\{ v \text{ irreducible suffix of }u \ | \  B(v)=A(u) \}$
	\item  $\rho_-(u)=\{ v \text{ irreducible suffix of }u \ | \  B(v)<A(u) \}$
\end{itemize}

\bigskip

We have $\lambda_+(u)\neq \emptyset $ \ and \ $\rho_+(u)\neq \emptyset $, by use of the properties of the consecutive length.

\bigskip

We proceed now to the definition of the coloring. We use a third color, without mentioning it, to provide us the suffix hypothesis. Namely, for a factor $u$ of $x$, $L(u)\geq 2$ if and only if there exist two factors $v,w$ of $x$ with $u=vw$ and $A(u)=A(v)$ and $B(u)=B(w)$.

\bigskip

Let $x$ be a non-ultimately periodic word. Define the coloring $C$ for a factor $u$ of $x$ with $L(u)\geq 2$ as :
\begin{itemize}
	\item $C(u)=\textbf{red}$ \ if $\lambda_+(u)=\rho_+(u)$ and  $\lambda_-(u)=\rho_-(u)$,
	\item $C(u)=\textbf{blue}$ \ otherwise.
\end{itemize}

\begin{theo}
Assume that $x$ contains no arbitrary large squares. Then no suffix of $x$ admits a super-monochromatic factorisation for the 3-coloring defined above.

\end{theo}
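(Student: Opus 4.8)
The plan is to argue by contradiction, mirroring the structure of the proof for the Zimin word (Theorem~3): there the colouring was arranged so that a concatenation is \textbf{red} exactly when the underlying index set is an interval, and the contradiction came from comparing a member of the monochromatic set that ``skips'' a block with one that does not. Here the analogous dichotomy is between consecutive runs of blocks (which will be forced \textbf{red}) and a concatenation $u_nu_{n+2}$ that omits the middle block (which cannot be \textbf{red} without producing a large square). Suppose some suffix $y$ of $x$ carries a super-monochromatic factorisation $y=u_1u_2u_3\ldots$. First I would normalise it using Section~3: Proposition~4 lets me assume $x$ recurrent, the remark following Proposition~3 lets me place the blocks consecutively, so $A(u_1)=k$ and $B(u_i)=A(u_{i+1})$ for all $i$; Proposition~5, applied through $C_{NF}$, lets me assume every finite concatenation $u_{n_1}\cdots u_{n_j}$ is a factor of $x$; and Proposition~6 (this is where the third colour is spent) gives the suffix property $u_1\cdots u_n$ is a suffix of $u_{n+1}$. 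By Proposition~13 I may take the consecutive lengths $L(u_n)$ as large as I wish, so every block and every concatenation has $L\geq 2$ and the \textbf{red}/\textbf{blue} colouring is defined on it.

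The second step is to show the factorisation is \textbf{red}. By monochromaticity it suffices to exhibit one \textbf{red} member, and I would take a single consecutive block $u_n$ for large $n$ and verify $\lambda_+(u_n)=\rho_+(u_n)$ and $\lambda_-(u_n)=\rho_-(u_n)$ directly. Consecutiveness gives $B(u_{n-1})=A(u_n)$, so the irreducible words $v$ with $B(v)\leq A(u_n)$ that left-extend $u_n$ (the $\lambda$ side) are precisely those read immediately to the left of the first occurrence of $u_n$, i.e. the relevant irreducible suffixes of $u_{n-1}$; while the suffix property, together with $u_1\cdots u_{n-1}$ being a suffix of $u_n$, identifies these same words with the irreducible suffixes of $u_n$ governed by $B(v)\leq A(u_n)$ (the $\rho$ side). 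The split according to $B(v)=A(u_n)$ versus $B(v)<A(u_n)$ is exactly the \textbf{red} condition. Propositions~11 and~12 control which suffixes are irreducible throughout this matching. Hence $u_n$ is \textbf{red}, and so is every member of the monochromatic set, in particular each $u_nu_{n+2}$.

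The decisive third step is to contradict the redness of the skip $w=u_nu_{n+2}$ for large $n$. The left context of $w$ is governed by $u_{n-1}$ (what precedes $u_n$), whereas its suffix structure is governed by the much longer $u_{n+1}$, because $u_1\cdots u_{n+1}$ is a suffix of $u_{n+2}$. Concretely, the suffix $u_1\cdots u_{n+1}$ of $u_{n+2}$ supplies an irreducible suffix $v$ of $w$ at scale $|u_{n+1}|$ lying in $\rho_+(w)\cup\rho_-(w)$, and the \textbf{red} condition would force $v$ into the corresponding $\lambda$ set, i.e. force $v$ to left-extend $w$ with $B(vw)=B(w)$. Unwinding the nested suffix structure $u_1\cdots u_{n+1}=(u_1\cdots u_n)\,t_{n+1}\,(u_1\cdots u_n)$ coming from the suffix property, this requires a long factor at scale $|u_{n+1}|$ to reoccur at the position dictated by the first occurrence of $w$; in a word with no large squares such a forced reoccurrence realises a square whose length grows with $|u_{n+1}|\to\infty$, contradicting the hypothesis. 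Therefore $w$ cannot be \textbf{red}, contradicting the second step and completing the proof.

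The hard part will be this third step. One must isolate the precise irreducible suffix $v$ produced by the skipped block $u_{n+1}$, confirm via the first-occurrence functions $A,B$ and the irreducibility bookkeeping of Propositions~11--12 that it genuinely sits in $\rho_+(w)$ or $\rho_-(w)$, and then show that forcing its $\lambda$-counterpart to exist is equivalent to creating a square of unbounded length rather than a mere repetition with a gap. Pinning down the configuration so that the no-large-square hypothesis is actually consumed -- and not some weaker avoidance property -- is the delicate point; by contrast the forcing of \textbf{red} in the second step is routine bookkeeping against the Zimin computation, and the first step is a direct quotation of Section~3.
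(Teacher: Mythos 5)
Your overall architecture coincides with the paper's: normalise via the Section~3 reductions, show every block $u_{n+1}$ is \textbf{red} by matching $\lambda_\pm(u_{n+1})$ with $\rho_\pm(u_{n+1})$ through the consecutiveness $B(u_n)=A(u_{n+1})$ and the suffix property, and then derive a contradiction from the redness of the skip $u_nu_{n+2}$ by exhibiting a large square. Your first two steps are essentially the paper's argument.

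The genuine gap is the third step, which you yourself flag as ``the hard part'', and the heuristic you give for it would not go through as stated. Two problems. First, you propose to use ``an irreducible suffix $v$ of $w=u_nu_{n+2}$ at scale $|u_{n+1}|$'': irreducibility means $L(v)=1$, and there is no reason an irreducible suffix of prescribed large length exists; the only guarantee available is $\rho_+(w)\neq\emptyset$, with no lower bound on $|v|$. Second, and more seriously, the conclusion you draw from forcing $v$ from the $\rho$ side into the $\lambda$ side of $w$ is that $v$ reoccurs immediately to the left of the first occurrence of $w$; but that gives two occurrences of $v$ at distance $|w|$ apart (one ending at $A(w)$, one ending at $B(w)$ as a suffix of $w$), and two occurrences at that distance yield a square only when $|v|\geq|w|$, which is impossible for a proper suffix. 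So ``such a forced reoccurrence realises a square'' does not follow. The paper's mechanism is different: it starts from $v\in\lambda_+(u_nu_{n+2})$ and passes to $\rho_+(u_nu_{n+2})$ (the opposite direction to yours), uses the bound $|v|\leq|u_{n+1}|$ together with the suffix property to place $v$ as a suffix of $u_{n+1}$, and then invokes the redness of the \emph{single block} $u_{n+2}$ --- that is, the identity $\rho_-(u_{n+2})=\lambda_-(u_{n+2})$ already established in step two --- to obtain a second, positionally controlled occurrence of $v$; the bookkeeping then shows $u_n$ is a suffix of $v$ and $vu_n$ is a suffix of $u_{n+1}$, so the square $u_nu_n$ of length $2|u_n|\to\infty$ occurs in $x$. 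This chaining through the redness of $u_{n+2}$, and not merely of $u_nu_{n+2}$, is the idea your sketch is missing, and without it the no-large-squares hypothesis is never actually consumed.
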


\begin{proof}

Let $Y=T^k(x)=u_1u_2u_3\ldots$ be a suffix of $x$ and a super-monochromatic factorisation. We have, by the suffix hypothesis and the properties of the consecutive length :
\begin{center}
	$\forall n\geq 1$, $u_1u_2\ldots u_{n}$ is a suffix of $u_{n+1}$

	$\forall n\geq 1$, $B(u_n)=A(u_{n+1})$.
\end{center}

We show that $u_{n+1}$ is \textbf{red}. We show that $\lambda_+(u_{n+1}) \subset \rho_+(u_{n+1})$.

Let $v\in \lambda_+(u_{n+1})$ so that $L(v)=1$,  $B(vu_{n+1})= B(u_{n+1})$ and $B(v)=A(u_{n+1})$.

We have \begin{center}
 $B(vu_{n+1})= B(u_{n+1}) \Longrightarrow$ $\left\{\begin{matrix} & v \text{ is a suffix of } u_n \\  & \text{or } u_n \text{ is a suffix of }v\end{matrix}\right.$
\end{center}
Since $v$ is irreducible, we see that $v$ must be a suffix of $u_n$. The suffix property implies that  $v$ is a suffix of $u_{n+1}$. And since  $B(v)=A(u_{n+1})$, we have $v\in \rho_+(u_{n+1})$

By a similar proof We obtain the other inclusions : $\rho_+(u_{n+1}) \subset \lambda_+(u_{n+1})$, \  $\lambda_-(u_{n+1}) \subset \rho_-(u_{n+1})$, \ and $\rho_-(u_{n+1}) \subset \lambda_-(u_{n+1})$.

This shows that $u_{n+1}$ is \textbf{red}, so the factorisation is \textbf{red}.

 Let $v\in \lambda_+(u_nu_{n+2})$, so that $L(v)=1$, and $B(v)=A(u_nu_{n+2})$. By the monochromatic hypothesis, the word $u_nu_{n+2}$ is \textbf{red}.
	
We have, \quad with $x=\ldots u_nu_{n+1}u_{n+2}\ldots$, $v\in \lambda_+(u_nu_{n+2})$ so that $v\in \rho_+(u_nu_{n+2})$. Since $v\in \rho_+(u_nu_{n+2})$, we see that $v$ is a suffix of $u_nu_{n+2}$. But obviously $|v|\leq |u_{n+1}|$, so by the suffix property, $v$ is a suffix of $u_{n+1}$. Since $v$ is a suffix of $u_{n+1}$, we have $v\in \rho_-(u_{n+2})=\lambda_-(u_{n+2})$. But $A(v)> B(u_n)$ and the suffix property imply that $u_n$ is a suffix of $v$. Now $vu_n$ is a suffix of $u_{n+1}$, so that $u_nu_n$ is an arbitrary large square that is also a factor of $x$. Contradiction.

\end{proof}

\end{document}